\theoremstyle{plain}
\newtheorem{theorem}{Theorem}[section]
\newtheorem{lemma}[theorem]{Lemma}
\newtheorem{proposition}[theorem]{Proposition}
\newtheorem{conjecture}[theorem]{Conjecture}
\newtheorem{problem}[theorem]{Problem}
\DeclareMathOperator*{\Bin}{Bin}
\let\Pr\relax
\DeclareMathOperator*{\Pr}{\mathbb{P}}
\let\Ex\relax
\DeclareMathOperator*{\Ex}{\mathbb{E}}
\DeclareMathOperator*{\Var}{Var}
\title{Set mappings for general graphs}
\author{
Lior Gishboliner\thanks{Department of Mathematics, University of Toronto, Canada.
Research supported in part by the NSERC Discovery Grant ``Problems in Extremal and Probabilistic Combinatorics".
\emph{Email}: \href{mailto:lior.gishboliner@utoronto.ca}{\tt lior.gishboliner@utoronto.ca}.}
\and Zhihan Jin\thanks{Department of Mathematics, ETH, Z\"urich, Switzerland. 
Research supported in part by SNSF grant 200021-228014.
\emph{Email}: \href{mailto:zhihan.jin@math.ethz.ch}{\tt zhihan.jin@math.ethz.ch}, \href{mailto:benjamin.sudakov@math.ethz.ch}{\tt benjamin.sudakov@math.ethz.ch}.}
\and Benny Sudakov\footnotemark[2]
}
\date{}
\begin{document}
\maketitle 

\begin{abstract}
The study of extremal problems for set mappings has a long history. It was introduced in 1958 by Erd\H{o}s and Hajnal, who considered the case of cliques in graphs and hypergraphs. Recently, Caro, Patk\'os, Tuza and Vizer revisited this subject, and initiated the systematic study of set mapping problems for general graphs. In this paper, we prove the following result, which
answers one of their questions. Let 
$G$ be a graph with $m$ edges and no isolated vertices and let $f : E(K_N) \rightarrow E(K_N)$ such that $f(e)$ is disjoint from $e$ for all $e \in E(K_N)$. Then for some absolute constant $C$, as long as $N \geq  C m$, there is a copy $G^*$ of $G$ in $K_N$ such that $f(e)$ is disjoint from $V(G^*)$ for all $e \in E(G^*)$. The bound $N = O(m)$ is tight for cliques and is tight up to a logarithmic factor for all $G$. 
\end{abstract}

\section{Introduction}\label{sec:intro}
Combinatorial problems for set mappings were introduced in 1958 by Erd\H{o}s and Hajnal \cite{EH}, who raised the following problem.
For fixed integers $k,\ell$ and a growing integer $N$, consider all functions $f : \binom{[N]}{k} \rightarrow \binom{[N]}{\ell}$ with the property that $f(X) \cap X = \emptyset$ for every $X \in \binom{[N]}{k}$. Let $p(f)$ be the maximum size of a set $P \subseteq [N]$ such that $f(X) \cap P = \emptyset$ for every $X \in \binom{P}{k}$. Let $p_{k,\ell}(N)$ be the minimum of $p(f)$ over all such functions $f$. 
%
Erd\H{o}s and Hajnal \cite{EH} proved that 
$\Omega\big(N^{\frac{1}{k+1}}\big) \leq p_{k,\ell}(N) \leq O\big( (N\log N)^{\frac{1}{k}} \big)$, where the upper bound is established by considering a random function $f$. Spencer \cite{Spencer}, using a now textbook application of the probabilistic method, improved the lower bound to $p_{k,\ell}(N) \geq \Omega\big(N^{\frac{1}{k}}\big)$, matching the upper bound up to a logarithmic term. Later, Conlon, Fox and Sudakov \cite{CFS} removed the logarithmic term in the case that $\ell \geq (k-1)!$, proving that $p_{k,\ell}(N) = \Theta\big (N^{\frac{1}{k}} \big)$ for such $\ell$. (The case $k=2$ of this result was proved earlier by F\"uredi \cite{Furedi}.)
Throughout the years, there have been several other works studying problems of a similar type, see \cite{AlonCaro,ACT,AM,Caro,Caro_Schoenheim}. 

Recently, Caro, Patk\'os, Tuza and Vizer \cite{CPTV_Ramsey,CPTV_Turan} returned to this topic and initiated the study of set mapping problems for general graphs. 
In \cite{CPTV_Ramsey}, they introduced the following parameter. For a graph $G$, let $w(G)$ be the minimum $N$ such that for every function $f : E(K_N) \rightarrow E(K_N)$ satisfying $f(e) \cap e = \emptyset$ for every $e \in E(K_N)$, there is a copy $G^*$ of $G$ in $K_N$ such that $f(e) \cap V(G^*) = \emptyset$ for every $e \in E(G^*)$. 
Here and throughout, we use $K_N$ to denote the complete graph on $N$ vertices and assume that $G$ has no isolated vertices. 
Note that for $G = K_n$, we are asking for a set $P$ of size $n$ such that $f(e) \cap P = \emptyset$ for every $e \in \binom{P}{2}$. Hence, this case recovers the parameter $p_{2,2}(N)$ introduced above.
Namely, the problem of estimating $w(K_n)$ is equivalent to the problem of estimating $p_{2,2}(N)$, and the aforementioned results of Spencer and Conlon-Fox-Sudakov give that $w(K_n) = \Theta(n^2)$. By taking a random mapping $f$, Caro, Patk\'os, Tuza and Vizer \cite{CPTV_Ramsey} showed that for every graph $G$ with $m$ edges, it holds that $w(G) \geq \Omega(\frac{m}{\log m})$. They further asked to obtain sharp upper bounds on $w(G)$. Answering this question, we use probabilistic arguments to prove the essentially tight bound $w(G) \leq O(e(G))$ for all graphs $G$. 

\begin{theorem}\label{thm:main}
There is an absolute constant $C > 0$ such that for every graph $G$ with $m$ edges and no isolated vertices, it holds that $w(G) \leq Cm$. 
\end{theorem}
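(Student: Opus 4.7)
I would first attempt the naive random embedding: let $\phi : V(G) \to [N]$ be a uniformly random injection. For each edge $e = uv \in E(G)$, the probability that $f(\phi(e)) \cap \phi(V(G))$ is nonempty is at most $2(v(G)-2)/(N-2) = O(v(G)/N)$, so the expected number of ``bad'' edges is $O(m \cdot v(G)/N) = O(m^2/N)$. For $N = Cm$ this is $O(m/C)$, far from zero, so the naive approach by itself does not suffice. The plan is to split $G$ into two regimes according to the ratio $v(G)/\sqrt m$.

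\emph{Dense regime} ($v(G) \le C_1 \sqrt m$). Apply the classical Spencer clean-set argument: sample $V \subseteq [N]$ by including each vertex independently with probability $p \asymp 1/\sqrt N$, then delete one endpoint from every pair $\{u,v\} \subseteq V$ with $f(\{u,v\}) \cap V \ne \emptyset$. The resulting set $V^*$ has expected size $\Omega(\sqrt N)$ and satisfies $f(e) \cap V^* = \emptyset$ for every $e \in \binom{V^*}{2}$. For $N \ge Cm$ with $C$ large, $|V^*| \ge v(G)$, and so any bijection $\phi : V(G) \to V^*$ gives a valid embedding.

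\emph{Sparse regime} ($v(G) > C_1 \sqrt m$). Partition $V(G) = V_H \sqcup V_L$, where $V_H$ consists of the vertices of $G$-degree $> \sqrt m$ (so $|V_H| \le 2\sqrt m$ by double counting) and every vertex in $V_L$ has degree $\le \sqrt m$. Embed $V_H$ into a Spencer-clean subset $S_H \subseteq [N]$ as above, and then embed $V_L$ into a random landing subset $L \subseteq [N] \setminus (S \cup F)$, where $F = \bigcup_{e \in E(G[V_H])} f(\phi_H(e))$ has size at most $2m$. After computing the bad-edge graph on $L$ (both $L$-internal edges and cross-edges to $S_H$) and deleting the $o(|L|)$ vertices of highest bad-degree, embed $V_L$ greedily into the remainder using that each $\ell \in V_L$ contributes at most $\sqrt m$ back-neighbour constraints per greedy step.

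\textbf{Main obstacle.} Balancing the parameters in the sparse regime is the crux. A direct greedy analysis asks for a bad-degree threshold $t$ satisfying $t \cdot \sqrt m < |L| - |V_L|$ (so that greedy insertion succeeds) and $2 \cdot \mathbb{E}[e(\bar H)]/t \le |L|/2$ (so that deletion does not exhaust $L$); together with the linearity bound $\mathbb{E}[e(\bar H)] = O(|L|^3/N)$, these constraints only give $N = O(m^{3/2})$ rather than the desired $O(m)$. To close the remaining $\sqrt m$ factor, one likely needs a more refined idea — for instance, exploiting that in the sparse regime $\sqrt m \ll v(G)$, or introducing a secondary cleaning step tailored to cross-edges between $S_H$ and $L$, or using a stronger concentration inequality that leverages the adversarial but structured nature of $f$. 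Identifying and justifying this additional ingredient is where the real technical work of the theorem lies.
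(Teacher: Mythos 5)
Your proposal correctly diagnoses why the naive first moment fails, and your two-regime split (few high-degree vertices versus many low-degree ones, with a Spencer-type clean set for the former) points in the right direction. But as you yourself note, the sparse regime only yields $N = O(m^{3/2})$, and this is a genuine gap, not a technicality: a single degree threshold at $\sqrt{m}$ cannot work. The loss comes from a mismatch of scales. Your landing set $L$ must have size at least $|V_L|$, which can be as large as $2m$, so the bad-pair count in $L$ is of order $|L|^3/N$ and the bad-degree threshold after cleaning is forced to be about $|L|^2/N$; multiplying by the worst-case back-degree $\sqrt{m}$ of a vertex in $V_L$ then forces $N \gtrsim \sqrt{m}\,|L| \gtrsim m^{3/2}$. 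The point is that $V_L$ mixes vertices of degree $1$ with vertices of degree $\sqrt{m}$, and the cleaning threshold has to be uniform over $L$ while the constraint budget per vertex is not.

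The paper's resolution is to refine your dichotomy into a full dyadic hierarchy. Order the vertices by non-increasing degree and let $U_0$ be the top $\sqrt{m}$ vertices and $U_j$ the next $2^{j-1}\sqrt{m}$, for $j = 1,\dots,T = \log_2(n/\sqrt{m})$; each $U_j$ gets its own disjoint random target set $X_j'$ of size $\approx 4|U_j|$ (so the targets are sized to the classes, not to $N$). Writing $d_j = \max_{u\in U_j} d(u)$, the ordering gives $\sum_j d_j|U_j| \le 4m$, and a second-moment (Chebyshev) computation shows one can choose the $X_j'$ so that the number of pairs $(x,y)\in X_i'\times X_j'$ with $f(xy)$ meeting $X_0'\cup\dots\cup X_j'$ is at most $|U_i||U_j|^2/(5m)$. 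After discarding vertices of bad-degree exceeding $|U_j|/d_j$, a greedy embedding of $u\in U_j$ loses at most $d(u)\cdot|U_j|/d_j \le |U_j|$ candidates to backward constraints --- exactly within the budget $|X_j'|\approx 4|U_j|$, simultaneously at every level. Your two-level scheme is the $T=1$ case of this and cannot equalize $d_j|U_j|$ across classes. A second ingredient you would also need: the ``forward'' constraint that $f(\varphi(v)\varphi(w))$ for already-embedded edges must avoid the not-yet-used targets $X_{j+1}',\dots$ (your set $F$). Removing $F$ wholesale only works at the top level; the paper instead makes the embedding algorithm deterministic and takes a union bound over all $2^{O(|U_{j+1}|)}$ possible inputs $X_0\subseteq X_0',\dots,X_j\subseteq X_j'$, using that $|U_0|+\dots+|U_j| = |U_{j+1}|$ so the entropy of the input is dominated by the exponential concentration of $|L\cap X_{j+1}'|$. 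Without these two ideas the $\sqrt{m}$ factor you identify does not go away.
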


\noindent
By the aforementioned lower bound of \cite{CPTV_Ramsey}, Theorem \ref{thm:main} is tight up to a factor of $O(\log m)$. Furthermore, it is tight when $G$ is a clique and when $|E(G)| = O(|V(G)|)$ (because trivially $w(G) \geq |V(G)|$). 

The proof of Theorem \ref{thm:main} is given in the following section. Section \ref{sec:concluding} contains some additional observations and open problems regarding the parameter $w(G)$ and related parameters.

\section{Proof of Theorem \ref{thm:main}}\label{sec:proof}
In this section, we prove the following theorem, which immediately implies \cref{thm:main}.
We use the word {\em embedding} to mean injection.
\begin{theorem}\label{theorem: technical}
    There exists an absolute constant $C>0$ such that the following holds.
    Let $m, \ell$ be integers and let $N \ge C\ell m$.
    Let $G$ be a graph with $m$ edges and no isolated vertices and let $f:\binom{[N]}{2} \to \binom{[N]}{\ell}$ be a function such that $f(e)\cap e = \emptyset$ for every $e \in \binom{[N]}{2}$.
    Then there exists an embedding $\varphi:V(G) \to [N]$ such that $f\big(\varphi(v)\varphi(u)) \cap \varphi(V(G)) = \emptyset$ for every edge $vu \in E(G)$.
\end{theorem}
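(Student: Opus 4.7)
My plan is to prove this by strong induction on $m$, using the Lov\'asz Local Lemma (LLL) to handle the bulk of the embedding and a structural partition to handle high-degree vertices. Let $n = |V(G)|$ (so $n \leq 2m$), set $T := 4m/n$, and partition $V(G) = V_H \sqcup V_L$ with $V_H := \{v : \deg_G(v) > T\}$. Then $|V_H| < n/2$ by a degree count, $V_L$ has maximum degree at most $T$, and $m_H := e(G[V_H]) < m$ because $V_L$ is non-empty and has no isolated vertices (so some edge of $G$ touches $V_L$ and lies outside $G[V_H]$). I also introduce the ``safe region'' $[N]^* := \{z \in [N] : r(z) \leq C_0 N\ell\}$ where $r(z) := |\{\{x, y\} \in \binom{[N]}{2} : z \in f(\{x, y\})\}|$ and $C_0$ is a large absolute constant, which has size at least $(1 - 1/(2C_0))N$ by averaging $\sum_z r(z) \leq \binom{N}{2}\ell$.

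The main steps are (1) embedding $V_H$ via the inductive hypothesis (strengthened to force the image into $[N]^*$), obtaining $\varphi_H : V_H \to [N]^*$, and (2) embedding $V_L$ by picking $X_v \in [N]$ independently and uniformly for each $v \in V_L$ and applying LLL. For each edge $e = vu \in E(G)$ and witness $w \in V(G) \setminus \{v, u\}$ with at least one of $v, u, w$ in $V_L$, let $A_{e, w}$ be the bad event ``$X_w \in f(X_v X_u)$'', where we set $X_y := \varphi_H(y)$ whenever $y \in V_H$. One checks $\Pr[A_{e, w}] = O(\ell/N)$; in particular, when $w \in V_H$ the restriction $\varphi_H(w) \in [N]^*$ yields $\Pr \leq 2 r(\varphi_H(w))/N^2 \leq 2C_0 \ell/N$. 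Each $V_L$-vertex participates in at most $T \cdot n + m = O(m)$ bad events, so the dependency degree is $O(m)$, and the symmetric LLL closes the step provided $N \geq Cm\ell$ for a large absolute constant $C$.

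The main obstacle I anticipate is making this inductive step iterate cleanly with a uniform constant $C$. Each recursive call embeds $V_H$ into the smaller subset $[N]^* \subseteq [N]$, losing a $1/(2C_0)$-fraction of the ground set, and the recursion has depth $\Theta(\log n)$ (as $|V_H|$ roughly halves at each level), so a naive analysis yields a spurious logarithmic factor. Resolving this cleanly will likely require either amortizing the loss by taking $C_0$ appropriately (possibly absorbing a $\log$ factor into the LLL probability bound and then removing it by a sharper argument), short-circuiting the recursion for graphs with $n = O(\sqrt{m})$ where direct LLL already gives $O(m\ell)$ since $\Delta(G)\cdot n + m = O(m)$ in that regime, or additionally controlling the asymmetric quantity $r_x(z) := |\{y : z \in f(\{x, y\})\}|$ to tame the ``mixed-edge'' bad events (those with one endpoint in $V_L$ and one in $V_H$, whose probabilities depend on $r_x$ rather than $r$).
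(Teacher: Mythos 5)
Your high-level decomposition (high-degree core handled recursively, low-degree remainder by the local lemma) is plausible, but there is a genuine gap at exactly the point you defer to your final parenthetical: the mixed-edge bad events. For an edge $vu$ with $v\in V_H$ already embedded and $u\in V_L$, and a witness $w\in V_H$, the bad event is $\varphi_H(w)\in f(\varphi_H(v)X_u)$ with only $X_u$ random, so its probability is $r_{\varphi_H(v)}(\varphi_H(w))/N$ in your notation; your stated bound $2r(\varphi_H(w))/N^2\le 2C_0\ell/N$ is only valid when \emph{both} endpoints of the edge are random. The quantity $r_a(b)=|\{x: b\in f(\{a,x\})\}|$ can equal $N-2$ for a pair $a,b$ both lying in $[N]^*$ (take $f(\{a,x\})\ni b$ for all $x$; then $r(b)\approx N\le C_0N\ell$), so membership in the safe region gives no control, and if $\varphi_H$ places such a pair on $(v,w)$ the event has probability $1$ and the LLL hypothesis fails. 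Averaging shows only about $N\ell/\varepsilon$ pairs $(a,b)$ have $r_a(b)\ge\varepsilon N$, so the fix requires the $V_H$-embedding to avoid a sparse but adversarial set of bad \emph{pairs}, with thresholds $\varepsilon$ that must shrink with the degrees of the vertices involved to survive the union over all embedded neighbours of $u$. That is a genuinely stronger inductive statement, and closing it is the technical heart of the problem; it is precisely what the paper's property (3) achieves, via a Chebyshev/second-moment bound on the number of bad pairs between randomly sampled host sets $X_i',X_j'$ with thresholds $|U_j|/d_j$ calibrated to the degree sequence, followed by deletion of the bad vertices and a deterministic greedy embedding.

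The obstacle you present as the main one --- the $\Theta(\log n)$ recursion depth shrinking $[N]^*$ and threatening a spurious $\log$ factor --- is real but more likely repairable (e.g.\ extract $[N]^*$ once at the top level and carry it through the recursion). Two smaller points: $G[V_H]$ may have isolated vertices, so the inductive hypothesis does not apply to it verbatim, and such vertices still carry constraints since their images must avoid $f(\varphi_H(v)\varphi_H(u))$ for edges inside $V_H$; and your dependency-degree and probability computations are fine for every case except the mixed-edge one above, which is where the argument currently breaks.
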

The case $\ell = 2$ is Theorem \ref{thm:main}.
Also, we remark that similar to \cite{CPTV_Ramsey}, by taking a random mapping $f$, it is easy to see that $N=\Omega(\ell m/\log(\ell m))$ is necessary for the conclusion of Theorem \ref{theorem: technical} to hold.

The rest of this section is devoted to proving Theorem \ref{theorem: technical}. Let $n$ be the number of vertices of $G$. 
We assume that $m$ is sufficiently large. For small $m$, we can simply increase the absolute constant $C$. 
Throughout this proof, the $o(1)$ notation is always with respect to $m \rightarrow \infty$. 
Also, by adding edges and vertices, if necessary, we may assume that $m$ is a perfect square.
This can be achieved by adding $O(\sqrt{m})$ edges and vertices. 
Then, by adding isolated vertices, we can assume that $n/\sqrt{m}$ is a power of 2.
This requires at most doubling the number of vertices of the current graph. Hence, in the resulting graph (after the changes), at most half of the vertices are isolated.
With a slight abuse of notation, we still denote the resulting graph by $G$ and its number of vertices and edges by $n$ and $m$, respectively.
So, $n \le 4m$ (because at least half of the vertices are not isolated).

Let $u_1,\dots,u_n$ be an ordering of the vertices of $G$ with non-increasing degrees, i.e. $d(u_1) \geq \dots \geq d(u_n)$. 
We denote this order by $\prec$; namely, $u_i \prec u_j$ for every $1 \leq i < j \leq n$.
Write $T:=\log_2(\frac{n}{\sqrt{m}}) \in \mathbb{N}$. 
Let $U_0 = \{u_i : 1 \leq i \leq \sqrt{m}\}$, and for each 
$1 \leq j \leq T$, let 
$U_j = \{u_i : \sqrt{m} \cdot 2^{j-1} < i \leq \sqrt{m} \cdot 2^j\}$. 
Note that $|U_j| = 2^{j-1}\sqrt{m}$ for every $1 \leq j \leq T$, so 
$|U_0| + \dots + |U_{j-1}| = |U_j|$. Also, $V(G) = U_0 \cup U_1 \cup \dots \cup U_T$.

Fix any function $f:\binom{[N]}{2}\to\binom{[N]}{\ell}$ such that $f(e) \cap e = \emptyset$ for every $e \in \binom{[N]}{2}$.
Let $X\subseteq [N]$ be the set of vertices $x \in [N]$ such that $x \in f(e)$ for at most $\ell N$ choices of $e \in \binom{[N]}{2}$.
By double-counting the pairs $(x,e) \in [N] \times \binom{[N]}{2}$ with $x \in f(e)$, we see that $(N-|X|)\ell N \le \binom{N}{2}\ell$.
Hence, $|X| > N/2 = (C/2) \ell m$.
From now on, our goal is to find an embedding $\varphi:V(G)\to X$ such that $f\big(\varphi(v)\varphi(u)) \cap \varphi(V(G)) = \emptyset$ for every edge $vu \in E(G)$.

To this end, we will find disjoint subsets $X_0,\dots,X_T \subseteq X$ with certain good properties (which will be elaborated later) and apply the following (deterministic) embedding algorithm with parameter $t=T$.
Here and throughout, for a sequence of sets $A_0,\dots, A_T$, we write $A_{\le j}$ for $A_0\cup\dots\cup A_j$ whenever $j \in \{0,\dots,T\}$.
\begin{algorithm}[H] \caption{Embedding algorithm} \label{alg:embedding}
    
    \textbf{Input:} $t \in \{0,\dots,T\}$ and disjoint subsets $X_0,\dots,X_t \subseteq X$. \\
    \textbf{Output:} An embedding $\varphi:U_{\le t} \to X_{\le t}$ with $\varphi(U_j) \subseteq X_j$ for all $0 \le j \le t$, or report failure.
    
    \medskip

    Go over $j=0,\dots,t$ in order and for each $j$, go over all vertices in $U_j$ in the ordering $\prec$.
    For each vertex $u \in U_j$, let $X_u$ be the set of all vertices $x \in X_j$ satisfying the following:
    \begin{enumerate}[label=(\alph*),ref=(\alph*)]
        \item\label{item: injection} $x \neq \varphi(v)$ for every $v \prec u$. 
        \item\label{item: later vertices} $x \not\in f\big(\varphi(v)\varphi(w)\big)$ for every edge $vw \in E(G)$ with $v,w \in U_0 \cup \dots \cup U_{j-1}$.  
        \item\label{item: former vertices} For each neighbour $v$ of $u$ (in $G$) with $v \prec u$, $f\big(\varphi(v)x\big) \cap X_{\le j} = \emptyset$.
    \end{enumerate}
    If $X_u$ is empty, we stop and report failure. 
    Otherwise, set $\varphi(u)$ to be the smallest element in $X_u$ with respect to the natural ordering on $X\subseteq V(K_N) = [N]$.
\end{algorithm}
Observe that if the above algorithm is successful, then the 
resulting embedding $\varphi:U_{\le t} \to X_{\le t}$ satisfies that for every edge $vw \in E(G)$ with $v,w \in U_{\le t}$, no $x \in f\big(\varphi(u)\varphi(v)\big)$ belongs to $\mathrm{Im}(\varphi)$. Indeed, 
suppose that $v\prec w$ and $w \in U_j$.
There are two possibilities: either $x \in X_0\cup\dots\cup X_j$ or $x \in X_{j+1}\cup\dots\cup X_t$. If $x \in X_0\cup\dots\cup X_j$, then, when we embed $w$, we make sure in \Cref{item: former vertices} that no vertices in $f(\varphi(v)\varphi(w))$ belong to $X_{\leq j}$. And if $x \in X_{j+1}\cup\dots\cup X_t$, then \Cref{item: later vertices} forbids $x$ from being the image of any vertex. Thus, if the above algorithm is successful when applied with $t = T$, then it generates an embedding $\varphi$ as required by Theorem \ref{theorem: technical}.
Note also that \Cref{alg:embedding} is deterministic, so we can speak of its output given input $X_0,\dots,X_t$ (i.e., this is well-defined). 

In order for \Cref{alg:embedding} to work, we first find disjoint subsets $X_0',\dots,X_T' \subseteq X$ 
with the following properties (later on we will carefully choose $X_j \subseteq X_j'$ for $j\in\{0,\dots,T\}$ as the input to the algorithm).
\begin{enumerate}[label=(\arabic*),ref=(\arabic*)]
    \item\label{item: size} $3.9|U_j| < |X_j'| < 4.1 |U_j|$ for all $j \in \{0,\dots,T\}$.
    \item\label{item: later constraints} For each $j \in \{0,\dots,T-1\}$ and every choice of subsets $X_i \subseteq X_i'$ for $i\in\{0,\dots,j\}$, let $\varphi:U_{\le j}\to X_{\le j}$ be the output of \Cref{alg:embedding} with input $X_0,\dots,X_{j}$ (no requirement if the algorithm fails). Then, the set $L=\bigcup_{vu} f\big(\varphi(v)\varphi(u)\big)$, where $vu$ enumerates over all edges in $G$ with $v,u\in U_{\le j}$, satisfies that $|L \cap X_{j+1}'| \le |X_{j+1}'|/9$.
    \footnote{We find property \ref{item: later constraints} to be the most surprising aspect of our analysis. Namely, the sets $X'_0,\dots,X'_T$ that will be used to find a good input for the algorithm are defined in terms of the outputs of the algorithm.}
    \item\label{item: former bad pairs} For every $0 \le i \le j \le T$, let $r_{i,j}$ be the number pairs of distinct vertices $(x,y) \in X_i'\times X_j'$ such that $f(xy)\cap X'_{\le j}\neq \emptyset$. Then $r_{i,j} \le {|U_i||U_j|^2}\big/{5m}$. 
\end{enumerate}

Let us briefly comment on these properties and their relation to \Cref{alg:embedding}. Property \ref{item: later constraints} will guarantee that in \Cref{item: later vertices} of the algorithm, only few vertices of $X_j$ are discarded due to edges $vw$ with $v,w \in U_{\leq j-1}$. And Property \ref{item: former bad pairs} will guarantee that in \Cref{item: former vertices} of the algorithm, only few vertices $x$ are discarded on account of $f(\varphi(v)x)$ belonging to $X_{\leq j}$ for some previously embedded $v$. 
Note that $i=j$ is allowed in Property \ref{item: former bad pairs}.

We now show the existence of the desired embedding given sets $X_0',\dots,X_T'$ satisfying \Cref{item: size,item: later constraints,item: former bad pairs}.
\begin{lemma}\label{lemma: given conditions}
    Suppose that disjoint subsets $X_0',\dots,X_T' \subseteq X$ satisfy \Cref{item: size,item: later constraints,item: former bad pairs}.
    Then there are subsets $X_j \subseteq X'_j$, $0 \leq j \leq T$, such that applying 
    \Cref{alg:embedding} with input $X_0,\dots,X_T$ returns an embedding $\varphi: V(G)\to X$ satisfying that $f\big(\varphi(v)\varphi(u)\big) \cap \varphi(V(G)) = \emptyset$ for every edge $vu \in E(G)$.
\end{lemma}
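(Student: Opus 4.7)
The plan is to use the probabilistic method. Sample each $X_j \subseteq X'_j$ by independently including each $x \in X'_j$ with probability $1/2$, and show that with positive probability over this choice, \Cref{alg:embedding} succeeds on input $X_0,\dots,X_T$. We analyze the algorithm stage by stage, revealing $X_j$ only after the algorithm has finished processing $U_{<j}$; conditioning on the algorithm having succeeded thus far (so that $\varphi|_{U_{<j}}$ is fixed), we aim to bound the failure probability at stage $j$ by $1/(T+1)$, and then union-bound over $j$.

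At stage $j$, for each $u \in U_j$ we need $X_u \neq \emptyset$. The forbidden set in $X_j$ decomposes into three parts: \Cref{item: injection} discards at most $|U_j|-1$ vertices (the previously embedded images in $U_j$); \Cref{item: later vertices} discards at most $|L \cap X_j| \leq |L \cap X'_j| \leq |X'_j|/9$ by \Cref{item: later constraints}; and \Cref{item: former vertices} discards a set of size at most $\sum_{v \prec u,\, v \sim u} |\tilde{B}_{\varphi(v),j} \cap X_j|$, where $\tilde{B}_{y,j} := \{x \in X'_j : f(yx) \cap X'_{\leq j} \neq \emptyset\}$ is a superset of the \Cref{item: former vertices}-forbidden set that depends only on the (already-fixed) past. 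Chernoff gives $|X_j| \geq 1.9\,|U_j|$ w.h.p., leaving a budget of roughly $0.4\,|U_j|$ for \Cref{item: former vertices}.

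The core estimate exploits \Cref{item: former bad pairs}. For each $i \geq 1$, combining $\sum_{y \in X'_i} |\tilde{B}_{y,j}| \leq r_{i,j} \leq |U_i||U_j|^2/(5m)$ with the degree bound $d(v) \leq 2m/|U_i|$ (which follows from the non-increasing degree ordering since any $v \in U_i$ has rank strictly greater than $|U_{<i}| = |U_i|$) yields
\[
\sum_{u \in U_j} \sum_{v \in N(u) \cap U_i,\, v \prec u} |\tilde{B}_{\varphi(v),j}| \;\leq\; \bigl(2m/|U_i|\bigr)\, r_{i,j} \;\leq\; 2|U_j|^2/5.
\]
Applying Chernoff concentration to each $|\tilde{B}_{\varphi(v),j} \cap X_j|$ and then a union bound over $u \in U_j$ will give the required per-$u$ bound with sufficiently high probability. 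The analogous sum over pairs with $v \in U_j$ preceding $u$ is similar but also needs to be tracked online.

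The main obstacle is the block $U_0$, where vertices can have degree as large as $n \leq 4m$, so the bound $d(v) \leq 2m/|U_i|$ fails outright. To address this, we plan to pre-clean $X'_0$ before sampling, discarding any $y$ whose $|\tilde{B}_{y,j}|$ is atypically large for some $j$. By Markov applied to $r_{0,j}/|X'_0|$ and a union bound over $j \leq T$, only a small fraction of $X'_0$ is lost, so that \Cref{item: size} is preserved up to constants; retained vertices satisfy $|\tilde{B}_{y,j}| = O(T|U_j|^2/m)$ simultaneously for all $j$, and combined with $\sum_{v\in U_0} d_j(v) \leq 2m$ this makes the $U_0$-contribution controllable after Chernoff concentration and averaging. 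A secondary subtlety worth verifying is that \Cref{item: later constraints} is stated "for every choice of $X_i \subseteq X'_i$," so it applies to the random $\varphi$ arising from our sampling, which is essential for making the inductive stagewise argument go through.
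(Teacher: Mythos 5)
Your overall shape is right (use \ref{item: later constraints} to control \ref{item: later vertices}, use \ref{item: former bad pairs} to control \ref{item: former vertices}, and note that \ref{item: injection} costs at most $|U_j|$), and your ``pre-clean'' idea for $X_0'$ is exactly the right move --- but the core estimate you propose for $i\ge 1$ does not close the argument, for two reasons.

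First, the bound
$\sum_{u \in U_j} \sum_{v \in N(u) \cap U_i,\, v \prec u} |\tilde{B}_{\varphi(v),j}| \le (2m/|U_i|)\, r_{i,j} \le 2|U_j|^2/5$
is only an \emph{average} over $u \in U_j$, while the algorithm needs $X_u \neq \emptyset$ for \emph{every} $u$. A single $u$ whose already-embedded neighbours happen to land on ``heavy'' vertices $y$ (those with $|\tilde{B}_{y,j}|$ comparable to $|X_j'|$ --- property \ref{item: former bad pairs} permits up to $|U_i||U_j|d_j/(5m)$ of them in $X_i'$, and nothing stops the algorithm from choosing them as images) can have all of $X_j$ forbidden by \Cref{item: former vertices}. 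Chernoff applied to $|\tilde{B}_{\varphi(v),j}\cap X_j|$ cannot rescue this: it concentrates around half of a deterministic quantity that is already too large. Second, even the average is insufficient: your estimate is per source block $i$, and a vertex $u\in U_j$ has neighbours in all blocks $U_0,\dots,U_j$, so summing over $i$ gives an average damage of order $j\,|U_j|$ per vertex, which exceeds $|X_j'| = \Theta(|U_j|)$ once $j$ is large. The multiplicity bound $d(v)\le 2m/|U_i|$ is the wrong quantity to use here: what matters is not the degree of the embedded neighbour $v$ but the degree of the vertex $u$ being embedded.

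The paper's proof repairs exactly this and is entirely deterministic --- no sampling of $X_j$ inside $X_j'$ is needed. Set $d_j := \max_{u\in U_j} d(u)$ and declare $x \in X_i'$ \emph{bad for $j$} if $r_j(x) := |\tilde{B}_{x,j}| \ge |U_j|/d_j$ (for $(i,j)=(0,0)$, if $r_0(x)\ge 1$). By \ref{item: former bad pairs} and Markov, $|B_{i,j}| \le |U_i|\,|U_j|d_j/(5m)$, and the weighted degree sum $\sum_j d_j|U_j| \le 4m$ (which follows from the ordering $\prec$ and $|U_{j-1}|\ge |U_j|/2$) gives $\sum_{j\ge i}|B_{i,j}| \le |U_i|$. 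Taking $X_i := X_i'\setminus\bigcup_{j\ge i}B_{i,j}$, \emph{every} surviving image $\varphi(v)$ contributes fewer than $|U_j|/d_j$ forbidden vertices at stage $j$, so \Cref{item: former vertices} removes fewer than $d(u)\cdot|U_j|/d_j \le |U_j|$ vertices for each individual $u$ --- a worst-case bound, uniform over the source block of $v$. This is the calibration your pre-cleaning of $X_0'$ gestures at (though your threshold $O(T|U_j|^2/m)$ is also not the right one); it must be applied to every $X_i'$, with threshold $|U_j|/d_j$. Your concern about \ref{item: later constraints} quantifying over all $X_i\subseteq X_i'$ is correct and is precisely why the property is stated that way, but the within-block dependency you flag (neighbours $v\in U_j$ with $v\prec u$) and the stagewise conditioning disappear entirely once the construction is deterministic. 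Finally, your proposal does not verify the last claim of the lemma (that the output satisfies $f(\varphi(v)\varphi(u))\cap\varphi(V(G))=\emptyset$); this is routine from \Cref{item: later vertices,item: former vertices} but should be said.
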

\begin{proof}
    For every $j \in \{0,\dots,T\}$, write $d_j := \max_{u \in U_j} d(u)$. The choice of the ordering $\prec$ and the sets $U_0,\dots,U_T$ guarantees that $d_0\ge d_1\ge \dots \ge d_T$, and that 
    $d(u) \ge d_j$ for every $j \in \{1,\dots,T\}$ and $u \in U_{j-1}$.
    Also, recall that less than $n/2$ vertices of $G$ are isolated and $|U_T|=|U_0|+\dots+|U_{T-1}| = \frac{n}{2}$.
    Hence, $d_T > 0$.
    We have 
    \begin{equation} \label{eq: weighted summation of degrees}
        2m = \sum_{u \in V(G)} d(u)
        \ge \sum_{j=1}^T d_j|U_{j-1}|
        \ge \frac{1}{2}\sum_{j=1}^T d_j|U_j|.
    \end{equation}
    The last inequality uses that $|U_{j-1}| = \frac{1}{2}|U_j|$ for $2 \leq j \leq T$, and $|U_1| = |U_0|$.
    
    For $0 \le i \le j \le T$ and $x \in X_i'$, let $r_j(x)$ denote the number of $y \in X'_j\setminus \{x\}$ such that $f(xy) \cap X'_{\le j} \neq \emptyset$.
    Note that $\sum_{x \in X_i'} r_j(x) = r_{i,j}$.
    For $0 \le i \le j \le T$ with $(i,j) \neq (0,0)$, let $B_{i,j}$ be the set of vertices $x \in X_i'$ such that $r_j(x) \ge |U_j|/d_j$ (these are ``bad'' vertices in $X_i'$ with respect to $X_j'$).
    Then, by property \ref{item: former bad pairs}, 
    \begin{equation} \nonumber
        |B_{i,j}| \le \frac{r_{i,j}}{|U_j|/d_j}
        \le \frac{|U_i|U_j|^2/5m}{|U_j|/d_j}
        = |U_i|\cdot \frac{|U_j|d_j}{5m}.
    \end{equation}
    For $i=j=0$, let $B_{0,0}$ be the set of vertices $x \in X_0'$ with $r_0(x) \ge 1$ (these are ``bad'' vertices in $X_0'$ with respect to $X_0'$).
    Using property \ref{item: former bad pairs} and that $|U_0| = \sqrt{m}$, we obtain
    \begin{equation} \nonumber
        |B_{0,0}| \le r_{0,0} \le \frac{|U_0|^3}{5m}
        = \frac{1}{5}|U_0|.
    \end{equation}
    Fixing $i \in \{1,\dots,T\}$ and summing over all $j \ge i$, we get that 
    \begin{equation} \nonumber
        \sum_{j=i}^T |B_{i,j}|
        \le |U_i|\cdot \frac{\sum_{j=i}^T|U_j|d_j}{5m}
        \le |U_i|\cdot \frac{4m}{5m}
        \le |U_i|.
    \end{equation}
    Here, we used \cref{eq: weighted summation of degrees} in the second inequality.
    Similarly, for $i=0$, we get that 
    \begin{equation} \nonumber
        \sum_{j=0}^T |B_{0,j}| = |B_{0,0}| + \sum_{j=1}^T |B_{0,j}|
        \le \frac{1}{5}|U_0| + |U_0|\cdot  \frac{\sum_{j=1}^T|U_j|d_j}{5m}
        \le \frac{1}{5}|U_0|+ |U_0|\cdot \frac{4m}{5m}
        \le |U_i|.
    \end{equation}
    
    Now, put $X_i:= X_i'\setminus\bigcup_{j=i}^T B_{i,j}$ for each $i \in \{0,\dots,T\}$.
    By the above discussion, we know that $|X_i| \ge |X_i'| - |U_i|$.
    We now prove that \Cref{alg:embedding} with input $X_0,\dots,X_T$ will return an embedding $\varphi: U_{\le T} \to X_{\le T}$ (i.e., the algorithm will not end in failure).
    It suffices to show that for every $j \in \{0,\dots,T\}$ and every $u \in U_j$, we have $X_u \neq \emptyset$, where $X_u$ is defined in \Cref{alg:embedding}.
    We prove this by induction on $j$.
    When $j = 0$, the definitions of $B_{0,0}$ and $X_0$ imply that $r_0(x) = 0$ for every $x \in X_0$. 
    This means that $f(xy) \cap X_0 = \emptyset$ for all distinct $x,y \in X'_0$.
    Hence, for every $u \in U_0$ we have $X_u = X_0 \setminus \{\varphi(v) : v \prec u\}$ (because \Cref{item: later vertices,item: former vertices} do not rule out any vertices). 
    We have $|\{\varphi(v) : v \prec u\}| < |U_0|$ and $|X_0| \geq |X_0'| - |U_0| \geq |U_0|$, using property \ref{item: size}. 
    Hence, $X_u$ is always non-empty, and the algorithm does not fail for any vertex $u \in U_0$.
	
    When $j \ge 1$, suppose that the algorithm successfully embedded $U_{\le j-1}$. 
    Let $L=\bigcup_{vw} f\big(\varphi(v)\varphi(w)\big)$ where $vw$ runs over all $vw \in E(G)$ with $v,w\in U_{\le j-1}$.
    Property \ref{item: later constraints} guarantees $|L \cap X_{j}'| \le |X_{j}'|/9$.
    So, for every $u \in U_j$, \Cref{item: later vertices} rules out at most $|L \cap X_{j}'| \le |X_{j}'|/9$ vertices in $X_j$.
    Then, we bound the number of vertices ruled out by \Cref{item: former vertices}. 
    By the definitions of the sets $X_i$ and $B_{i,j}$, any $v \prec u$ satisfies that $r_j(\varphi(v)) < |U_j|/d_j$.
    Recall that $r_j(\varphi(v))$ is an upper bound for the number of $x \in X_j$ satisfying $f(\varphi(v)x) \cap X_{\le j} \neq \emptyset$.
    Hence, for any given neighbour $v$ of $u$ (in $G$) with $v \prec u$, less than $|U_j|/d_j$ choices $x \in X_j$ are ruled out by \Cref{item: former vertices} on account of having $f(\varphi(v)x) \cap X_{\le j} \neq \emptyset$.
    Considering all neighbours $v$ of $u$ with $v \prec u$, \Cref{item: former vertices} rules out less than $|U_j|/d_j \cdot d(u)\le |U_j|$ vertices (recalling that $d(u) \le d_j$ by the definition of $d_j$).
    Finally, \Cref{item: injection} rules out less than $|U_j|$ vertices.
    Hence, it follows that $$|X_u| \ge |X_j| - |U_j| - |L\cap X_j'| - |U_j| \ge |X_j'|-3|U_j|-|X_j'|/9> 0,$$
    where we used $|X_j| \geq |X'_j| - |U_j|$ in the second inequality and property \ref{item: size} in the last one.
    This means that the algorithm does not fail for vertex $u$, and therefore, the algorithm will successfully return an embedding $\varphi: U_{\le T} \to X_{\le T}$.

    We now complete the proof of Lemma \ref{lemma: given conditions} by showing that $\varphi$ satisfies that $f\big(\varphi(v)\varphi(u)) \cap \varphi(V(G)) = \emptyset$ for every $vu \in E(G)$.
    Indeed, let $vu \in E(G)$ such that $v \prec u$ and take any $w \in V(G) \setminus \{v,u\}$.
    Suppose $u \in U_j$.
    If $w \in U_{j+1}\cup\dots\cup U_T$, then when embedding $w$ in \Cref{alg:embedding}, \Cref{item: later vertices} guarantees that $\varphi(w) \notin f\big(\varphi(v)\varphi(u)\big)$.
    If $w \in U_0\cup\dots\cup U_j$, then when embedding $u$, \Cref{item: former vertices} guarantees that $f\big(\varphi(v)\varphi(u)\big) \cap X_{\le j} = \emptyset$, so $\varphi(w) \notin f\big(\varphi(v)\varphi(u)\big)$, as $\varphi(w) \in X_{\leq j}$.
    This proves the lemma.
\end{proof}

We are left to find disjoint sets $X_0',\dots,X_T' \subseteq X$ satisfying properties \ref{item: size} - \ref{item: former bad pairs}.
\begin{lemma}\label{lemma: exist good sets}
    There exist disjoint sets $X_0',\dots,X_T' \subseteq X$ satisfying properties \ref{item: size} - \ref{item: former bad pairs}.
\end{lemma}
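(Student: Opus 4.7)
The plan is to construct $X_0', \dots, X_T'$ by a random process: for each $x \in X$, independently assign $x$ to one of $T+1$ buckets with probability $p_j := 4|U_j|/|X|$ per bucket $j$, leaving $x$ unassigned otherwise. Since $\sum_j p_j = 4n/|X| \le 16m/|X| \le 32/(C\ell) < 1$ for $C$ large, this is a valid distribution; let $X_j'$ be the set of elements assigned to bucket $j$. I will show that all three properties hold with positive probability.

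For property \ref{item: size}, each $|X_j'|$ is a sum of $|X|$ independent indicators with mean $4|U_j| \ge 4\sqrt{m}$, so a Chernoff bound gives $|X_j'| \in (3.9|U_j|,\,4.1|U_j|)$ with probability $1-e^{-\Omega(\sqrt{m})}$, and a union bound over $O(\log m)$ indices handles all $j$. For property \ref{item: former bad pairs}, fix $i\le j$, and for distinct $x,y\in X$ bound
$$\Pr[\,x\in X_i',\ y\in X_j',\ f(xy)\cap X_{\le j}'\neq\emptyset\,] \le p_i \cdot p_j \cdot \ell \cdot p_{\le j},$$
where $p_{\le j} = 4|U_{\le j}|/|X| \le 8|U_j|/|X|$ (since $|U_{\le j}|\le 2|U_j|$). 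Summing over pairs gives $\Ex[r_{i,j}] = O(|U_i||U_j|^2/(Cm))$, so Markov's inequality together with a union bound over the $O(T^2)$ pairs yields \ref{item: former bad pairs} with high probability for $C$ large.

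The main obstacle, and the most delicate step, is property \ref{item: later constraints}. Fix $j\in\{0,\dots,T-1\}$ and condition on $X_0',\dots,X_j'$. Every candidate input $(X_0,\dots,X_j)$ with $X_i\subseteq X_i'$ deterministically yields either a failure of \Cref{alg:embedding} or an embedding $\varphi$ and hence a fixed set $L=L(X_0,\dots,X_j)$ with $|L|\le m\ell$. There are at most $2^{|X_{\le j}'|} \le 2^{8|U_j|} = 2^{4|U_{j+1}|}$ such inputs. For any fixed $L$, the variables $\indicator[z\in X_{j+1}']$ for $z\in L$ are mutually independent (conditional on $X_{\le j}'$) each with mean essentially $p_{j+1}$, so $\Ex[|L\cap X_{j+1}'|] \le m\ell\cdot p_{j+1} \le 8|U_{j+1}|/C$. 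The Chernoff bound $\Pr[X\ge t]\le (e\mu/t)^t$ with $t=0.43|U_{j+1}|$ then yields
$$\Pr\!\left[\,|L\cap X_{j+1}'|>0.43|U_{j+1}|\,\right]\le \left(\frac{C_0}{C}\right)^{0.43|U_{j+1}|}$$
for some absolute constant $C_0$. For $C$ a sufficiently large absolute constant (say $\log_2(C/C_0) \gg 10$), this tail probability defeats the $2^{4|U_{j+1}|}$ union bound over candidate inputs, so the failure probability at level $j$ is at most $2^{-\Omega(|U_{j+1}|)} \le 2^{-\Omega(\sqrt{m})}$. A final union bound over $j$ costs only a factor of $T = O(\log m)$, and combining with property \ref{item: size} (which guarantees $|X_{j+1}'| > 3.9|U_{j+1}|$) gives $|L\cap X_{j+1}'|\le 0.43|U_{j+1}| \le |X_{j+1}'|/9$ as required.

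The hard part is precisely the quantification over exponentially many input configurations in \ref{item: later constraints}; the saving grace is that for \emph{any} fixed input, the set $L$ is determined purely by $X_0',\dots,X_j'$, so the randomness of $X_{j+1}'$ acts on it independently. The super-exponential Chernoff tail $(C_0/C)^{\Omega(|U_{j+1}|)}$ then beats the $2^{O(|U_j|)}=2^{O(|U_{j+1}|)}$ union bound provided $C$ is a sufficiently large absolute constant, which is exactly the slack we have.
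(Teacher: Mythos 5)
Your construction and your treatment of properties \ref{item: size} and \ref{item: later constraints} match the paper's proof essentially verbatim (same bucketing probabilities $4|U_j|/|X|$, same Chernoff bound for \ref{item: size}, and for \ref{item: later constraints} the same union bound over the at most $2^{O(|U_{j+1}|)}$ candidate inputs beaten by a binomial upper tail of the form $(O(1)/C)^{\Omega(|U_{j+1}|)}$). The gap is in property \ref{item: former bad pairs}. Your first-moment bound gives $\Ex[r_{i,j}] \le O(1/C)\cdot |U_i||U_j|^2/m$, so Markov against the threshold $|U_i||U_j|^2/(5m)$ yields a failure probability of only $O(1/C)$ for each fixed pair $(i,j)$ --- a \emph{constant}, not something tending to $0$. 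Since there are $\Theta(T^2)=\Theta(\log^2 m)$ pairs and $C$ must be an absolute constant, the union bound contributes $\Theta(\log^2 m/C)\to\infty$, so the argument does not close; your claim that this holds ``with high probability'' is not justified by Markov alone.

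The paper resolves exactly this issue with a second-moment argument: it bounds $\Var[r^\star_{i,j}]$ by splitting the covariances of the indicators $I_{x,y,z}$ according to $|\{x,y,z\}\cap\{x',y',z'\}|$, and Chebyshev then gives a deviation probability of order $m/(|U_i||U_j|)$, which \emph{decays geometrically in $i$ and $j$}; summing over all pairs gives at most $9/16$. Note that the covariance computation is where the preprocessing of $X$ (each $x\in X$ lies in $f(e)$ for at most $\ell N$ edges $e$) is actually used, to control the number of pairs of triples sharing only their $z$-coordinate; your proposal never invokes this property of $X$, which is another sign that the concentration step is missing. To repair your proof you would need either this variance computation or some other mechanism making the per-pair failure probability summable over the $\Theta(\log^2 m)$ pairs.
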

\begin{proof}
    For $j \in \{0,\dots,T\}$, set $$\alpha_j := 4|U_j|/|X|.$$
    Note that $\alpha_0+\dots+\alpha_T = \frac{4}{|X|}(|U_0|+\dots+|U_T|) = \frac{4n}{|X|} \leq \frac{1}{2}$ as $|X| \ge N/2=(C/2)\ell m$ and $n \le 4m$.
    We sample disjoint sets $X_0',\dots,X_T' \subseteq X$ as follows: for each $x \in X$, take a random $i_x \in \{\star,0,\dots,T\}$ such that $\Pr[i_x = i] = \alpha_i$ for $0 \leq i \leq T$, and $\mathbb{P}[i_x = \star] = 1-\sum_{i=0}^T\alpha_i$ (independently of all other vertices). 
    If $i_x \neq \star$ then place $x$ in $X_i'$ for $i = i_x$.
    The sets $X_0',\dots,X_T'$ are clearly disjoint.
    It suffices to prove that with positive probability, $X_0',\dots,X_T'$ satisfy properties \ref{item: size} - \ref{item: former bad pairs}.

    Property \ref{item: size} follows directly from the Chernoff bound.
    Indeed, for $j \in \{0,\dots,T\}$, we have 
    $|X'_j| \sim \Bin(|X|,\alpha_j)$, so $\Ex[|X_j'|] = 4|U_j|$.
    By the Chernoff bound, the probability that $3.9|U_j| \leq |X_j'| \leq 4.1|U_j|$ is at least $1-\exp(-\Omega(|U_j|)) \ge 1-\exp(\Omega(\sqrt{m}))$ since $|U_j| \ge |U_0| = \sqrt{m}$.
    By a union bound over all $j \in \{0,\dots,T\}$, we know that $(1)$ holds with probability at least $1-T\exp(-\Omega(\sqrt{m})) = 1 - o(1)$.
    Here, we used that $T = O(\log m)$.

    For property \ref{item: later constraints}, fix $j \in \{0,\dots,T-1\}$ and condition on the choice of $X_0',\dots,X_{j}'$. We also assume that 
    $3.9|U_i| \leq |X'_i| \leq 4.1 |U_i|$ for all $0 \leq i \leq j$, which holds with probability $1 - o(1)$ by the above. 
    Put $X' := X\setminus X_{\le j}'$.
    Conditioned on $X_0',\dots,X_j'$, the set $X_{j+1}'$ is distributed as a random subset of $X'$ where each $x \in X'$ is chosen with probability $\frac{\alpha_{j+1}}{1-\sum_{i\le j}\alpha_i} \le 2\alpha_{j+1}$.
    Fix any $X_i \subseteq X'_i$ for $0 \leq i \leq j$. The number of choices for $X_0,\dots,X_j$ is 
    $$
        2^{\sum_{i=0}^j |X_i'|}
        \le 2^{4.1\sum_{i=0}^j |U_i|}
        = 2^{4.1|U_{j+1}|}.
    $$
    Now, having fixed $X_0,\dots,X_j$, let $\varphi:U_{\le j} \to X_{\le j}$ be the output of \Cref{alg:embedding} with input $X_0,\dots,X_j$.
    We may assume that the algorithm succeeds to produce such a $\varphi$, since otherwise there is no requirement with respect to $X_0,\dots,X_j$. 
    Write $L:=\bigcup_{vu} f\big(\varphi(v)\varphi(u)\big)$ where $vu \in E(G)$ and $v,u\in U_{\le j}$.
    Note that $L$ is uniquely determined by $X_0,\dots,X_j$ via \Cref{alg:embedding} and that $|L| \le m\ell$.
    Hence, $|L\cap X_{j+1}'|$ is stochastically dominated by a binomial distribution with $m\ell$ trials and success probability $2\alpha_{j+1}$.
    This implies that 
    \begin{equation} \nonumber
      \begin{aligned}
        &\,\Pr\!\left[ \Big|L\cap X_{j+1}'\Big| > \frac{\alpha_{j+1}|X|}{10} \right]
        \le \binom{m\ell}{{\alpha_{j+1}|X|}/{10}} \big(2\alpha_{j+1}\big)^{{\alpha_{j+1}|X|}/{10}}
        \le \left(\frac{20em\ell}{|X|}\right)^{\alpha_{j+1}|X|/10} \\
        \leq&\, \left(\frac{40em\ell}{N}\right)^{\alpha_{j+1}|X|/10}
        \leq \left(\frac{40em\ell}{C\ell m}\right)^{(2/5)|U_{j+1}|}
        \le 2^{-5|U_{j+1}|}.
      \end{aligned}
    \end{equation}
    Here, we used that $|X| \ge N/2 \geq (C/2)\ell m$ where $C>0$ is a sufficiently large constant.
    Taking the union bound over all choices of $X_0,\dots,X_j$, we see that the probability that $|L\cap X_{j+1}'| \le \alpha_{j+1}|X|/10$ for all such choices is at least $1-2^{4.1|U_{j+1}|}\cdot 2^{-5|U_{j+1}|}=1-2^{-\Omega(\sqrt{m})}$.
    Moreover, we already know that with probability $1 - o(1)$ it holds that $|X_{j+1}'| \ge 3.9|U_{j+1}|=(39/40)\alpha_{j+1}|X|$, so $|L\cap X_{j+1}'| \le \alpha_{j+1}|X|/10$ implies $|L\cap X_{j+1}'| \le |X_{j+1}'|/9$.
    Taking the union bound over all $j \in \{0,\dots,T-1\}$, we see that with probability $1 - o(1)$, properties \ref{item: size} and \ref{item: later constraints} both hold. 

    We are left to show that \ref{item: former bad pairs} holds with probability at least $1/4$, say.
    Fix $0 \le i \le j \le T$.
    For every pair of distinct $x,y \in X$ and every $z \in f(x,y) \cap X$, let $I_{x,y,z}$ be the indicator of the event that $x \in X_i', y \in X_j', z\in X_{\le j}'$.
    Clearly, $r_{i,j} \le r_{i,j}^\star:=\sum_{x\neq y,z\in f(x,y)\cap X} I_{x,y,z}$ (since the same pair $x\not =y$ might be counted for several $z$).
    For convenience, put $\alpha_{\le j} :=\sum_{k=0}^j \alpha_k = 2\alpha_j$, using that $|U_j|=\sum_{k=0}^{j-1} |U_{k}|$.
    Writing $M:=|X| \ge N/2\ge (C/2)\ell m$ (where $C$ is a sufficiently large constant), we have 
    \begin{equation}
    \label{eq1}
        \Ex[r^\star_{i,j}] \le M(M-1)\ell\cdot \alpha_i\alpha_j\alpha_{\le j}
        \le 2M^2\ell \alpha_i\alpha_j^2
        = \frac{128 \ell |U_i||U_j|^2}{M}
        < \frac{|U_i||U_j|^2}{m}.
    \end{equation}

    Next, we bound the variance of $r^\star_{i,j}$.
    It suffices to consider the covariance of two events $I_{x,y,z}$ and $I_{x',y',z'}$ where $x\neq y,z \in f(xy)\cap X$ and $x'\neq y',z' \in f(x'y')\cap X$. We split these according to the size of the intersection 
    $|\{x,y,z\}\cap\{x',y',z'\}|$.
    \begin{itemize}
        \item If $\{x,y,z\}\cap\{x',y',z'\}=\emptyset$, then $I_{x,y,z}$ and $I_{x',y',z'}$ are independent, so their covariance is 0.
        \item If $|\{x,y,z\}\cap\{x',y',z'\}|=1$, then the probability that $I_{x,y,z}=I_{x',y',z'}=1$ is at most $\alpha_i\alpha_j^2\alpha_{\le j}^2=4\alpha_i\alpha_j^4$, using that $\alpha_i \leq \alpha_j \leq \alpha_{\leq j}$.
        Let us count the pairs $(x,y,z),(x',y',z')$ with $|\{x,y,z\} \cap \{x',y',z'\}| = 1$. 
        The number of choices for $\{x,y,z\}$ is at most $|X|^2\ell = M^2\ell$. 
        Fixing $x,y,z$, we know that one of the vertices $x',y',z'$ belongs to $\{x,y,z\}$. 
        If $x' \in \{x,y,z\}$, then there are 3 choices for $x'$, at most $M = |X|$ choices for $y'$ and at most $\ell$ choices for $z'$. The same holds if $y' \in \{x,y,z\}$.
        And if $z' \in \{x,y,z\}$ then there are $3$ choices for $z'$ and then at most $2\ell N$ choices for $x'y'$, by the definition of the set $X$ (and as $z' \in X$). 
        In total, the number of choices for $(x,y,z),(x',y',z')$ with $|\{x,y,z\} \cap \{x',y',z'\}| = 1$ is at most $M^2\ell \cdot \left( 6 M\ell + 6 \ell N \right) \leq M^2\ell \cdot 18M\ell = 18M^3\ell^2$.
    \item 
    If $|\{x,y,z\}\cap\{x,y,z\}|=2$, then the probability that $I_{x,y,z}=I_{x',y',z'}=1$ is at most $\alpha_i\alpha_j\alpha_{\le j}^2=4\alpha_i\alpha_j^3$.
    Also, the number of such pairs is at most $13M^3\ell$.
    Indeed, there are at most $M^2 \ell$ choices for $(x,y,z)$. Fixing $x,y,z$, two of the vertices $x',y',z'$ must be in $\{x,y,z\}$. If these two vertices are $x',z'$ then there are at most $6$ choices for $x',z'$ and then at most $M$ choices for $y'$. The same applies to the pair $y',z'$. And if $x',y' \in \{x,y,z\}$ then there are $6$ choices for $x',y'$ and at most $\ell$ choices for $z'$. This gives at most $M^2\ell \cdot (12M + 6\ell) \leq 13M^3\ell$ choices in total, as claimed. 
    \item If $|\{x,y,z\}\cap\{x,y,z\}|=3$, then the probability that $I_{x,y,z}=I_{x',y',z'}=1$ is at most $\alpha_i\alpha_j\alpha_{\le j}=2\alpha_i\alpha_j^2$. Also, there are at most $5M^2\ell$ such choices for $(x,y,z),(x',y',z')$. (I.e., $x',y',z'$ is any permutation of $x,y,z$ except $x,y,z$ itself).
    \end{itemize}
    
    \noindent
    These observations, together with (\ref{eq1}), plus the fact that $M\alpha_j  = 4|U_j| \ge 1$, imply that 
    \begin{equation} \nonumber
      \begin{aligned}
        \Var[r^\star_{i,j}]
        &\le \Ex[r^\star_{i,j}] + \sum_{(x,y,z)\neq (x',y',z')} \mathrm{Cov}(I_{x,y,z},I_{x',y',z'}) \\ &\leq 
        2M^2 \ell \alpha_i \alpha_j^2 + 18M^3\ell^2 \cdot 4 \alpha_i \alpha_j^4 + 
        13M^3 \ell \cdot 4\alpha_i\alpha_j^3 + 
        5M^2 \ell \cdot 2\alpha_i \alpha_j^2
        \\ &= 
        72 M^3\ell^2\alpha_i \alpha_j^4 +
        52 M^3 \ell\alpha_i \alpha_j^3 + 
        12M^2 \ell \alpha_i \alpha_j^2
        \\ &< 
        100M^3\ell^2\alpha_i\alpha_j^4 + 
        100 M^3\ell\alpha_i\alpha_j^3.
      \end{aligned}
    \end{equation}
    We now plug in $\alpha_i = 4|U_i|/M,\alpha_j=4|U_j|/M$, and use that $|U_j|\le |U_T| = n/2 \le 2m$ and that $M\ge (C/2)\ell m$ with $C > 0$ sufficiently large. We get:  
    \begin{equation} \nonumber
      \begin{aligned}
        \Var[r^\star_{i,j}]
        \le 
        100 M^3 \ell^2 \frac{4^5 |U_i||U_j|^4}{M^5} + 
        100 M^3 \ell \frac{4^4 |U_i||U_j|^3}{M^4}  
        \le  
        \frac{100 \cdot 4^6 \cdot |U_i||U_j|^4}{C^2m^2} + 
        \frac{100 \cdot 2^9 \cdot |U_i||U_j|^3}{Cm}  
        < \frac{|U_i||U_j|^3}{m}.
      \end{aligned}
    \end{equation}
    By Chebyshev's inequality, we have that 
    \begin{equation} \nonumber
	\Pr\!\left[ r_{i,j}^\star \geq \frac{5|U_i||U_j|^2}{m} \right] 
        \leq \Pr\!\left[ \Big| r_{i,j}^\star - \Ex[r_{i,j}^\star] \Big| \geq \frac{4|U_i||U_j|^2}{m}\right] 
        \leq  \frac{\Var[r_{i,j}^\star]}{16|U_i|^2|U_j|^4/m^2} 
        < \frac{m}{16|U_i||U_j|}.
    \end{equation}
    By the union bound, the probability that there are $0 \leq i \leq j \leq T$ with $r_{i,j}^\star \geq \frac{5|U_i||U_j|^2}{m}$ is at most 
    $$
        \sum_{0 \leq i \leq j \leq T}\frac{m}{16|U_i||U_j|} 
        \le \frac{1}{16} \left(\sum_{i=0}^T\frac{\sqrt{m}}{|U_i|}\right)^2 
        \le \frac{1}{16} \left( 1+\sum_{i\ge 0} 2^{-i} \right)^2
        = \frac{9}{16}.
    $$
    Recall that $r_{i,j} \le r^\star_{i,j}$.
    Therefore, property \ref{item: former bad pairs} holds with probability at least $7/16>1/4$, and this completes the proof of Lemma \ref{lemma: exist good sets} and hence of Theorem \ref{theorem: technical}.
\end{proof}

\section{Concluding remarks and open problems}\label{sec:concluding}
By combining Theorem \ref{thm:main} and the lower bound from \cite{CPTV_Ramsey}, we get that for every graph $G$ with $m$ edges and no isolated vertices, it holds that $\Omega(\frac{m}{\log m}) \leq w(G) \leq O(m)$. Are there graphs $G$ for which the lower bound is the answer? In particular, we wonder if this is the case for $G = K_{n,n}$.
\begin{problem}
    Determine $w(K_{n,n})$.
\end{problem}

The definition of $w(G)$ generalizes naturally to $k$-uniform hypergraphs ($k$-graphs for short). 
Write $K_N^{(k)}$ for the $N$-vertex complete $k$-graph.
For a $k$-graph $G$, let $w(G)$ be the minimum $N$ such that for every map $f : E(K_N^{(k)}) \rightarrow E(K_N^{(k)})$ satisfying $f(e) \cap e = \emptyset$ for every $e \in E(K_N^{(k)})$, there is a copy $G^*$ of $G$ in $K_N^{(k)}$ such that $f(e) \cap V(G^*) = \emptyset$ for every $e \in E(G^*)$. 
It is natural to conjecture that the bound $w(G) = O(e(G))$ continues to hold for hypergraphs (extending Theorem \ref{thm:main}), though our methods do not easily generalize.  
\begin{conjecture}
    For every $k$-graph $G$ with $m$ edges and no isolated vertices, it holds that $w(G) = O(m)$. 
\end{conjecture}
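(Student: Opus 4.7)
The plan is to extend the framework of Theorem~\ref{theorem: technical} to $k$-graphs, keeping the same skeleton. Bucket $V(G)$ by non-increasing degree into sets $U_0, \ldots, U_T$ of doubling sizes with $|U_0| = \sqrt{m}$; restrict attention to the dense subset $X \subseteq [N]$ of vertices appearing in $f(e)$ for few hyperedges $e$, so that $|X| \geq N/2$ by double counting; and randomly split $X$ into $X'_0, \ldots, X'_T$ with $|X'_j| \approx 4|U_j|$ using probabilities $\alpha_j = 4|U_j|/|X|$. The embedding algorithm places $U_j$ into $X_j \subseteq X'_j$ sequentially in the degree order, and when placing $u \in U_j$ excludes images $y \in X_j$ that (a) were already used, (b) lie in $f(\varphi(e))$ for some hyperedge $e \subseteq U_{<j}$ (the ``later'' check), or (c) have $f(\varphi(v_1), \ldots, \varphi(v_{k-1}), y) \cap X_{\leq j} \neq \emptyset$ for some hyperedge $\{u, v_1, \ldots, v_{k-1}\} \in E(G)$ with $v_1, \ldots, v_{k-1}$ already embedded (the ``former'' check).

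Properties~\ref{item: size} and \ref{item: later constraints} transfer with essentially no change: Chernoff gives the size bound, and the union bound for \ref{item: later constraints} still only uses $|\bigcup_e f(\varphi(e))| \leq m \ell$ and the $2^{O(|U_{\leq j}|)}$ possible outputs of the algorithm on $U_{\leq j}$. Property~\ref{item: former bad pairs} is the real change: the natural generalization is a bound on $r_{i_1, \ldots, i_k}$, the number of distinct tuples $(x_1, \ldots, x_k) \in X'_{i_1} \times \cdots \times X'_{i_k}$ with $f(x_1, \ldots, x_k) \cap X'_{\leq i_k} \neq \emptyset$, and the target is $O_k(|U_{i_1}| \cdots |U_{i_{k-1}}| |U_{i_k}|^2 / m)$. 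The first moment $\Ex[r_{i_1, \ldots, i_k}] \leq M^k \ell \cdot \alpha_{i_1} \cdots \alpha_{i_k} \alpha_{\leq i_k}$, combined with $\alpha_j = 4|U_j|/M$ and $M \geq (C/2) \ell m$, hits exactly this order. Chebyshev concentration then requires enumerating the $O(k^2)$ intersection patterns of the two $(k+1)$-sets $\{x_1, \ldots, x_k, z\}$ and $\{x'_1, \ldots, x'_k, z'\}$ witnessing the event and bounding each covariance term in turn.

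Given properties~\ref{item: size}--\ref{item: former bad pairs}, one passes from $X'_i$ to $X_i$ by sequentially removing vertices that lie in too many bad $(k-1)$-tuples, where a tuple $(x_1, \ldots, x_{k-1}) \in X'_{i_1} \times \cdots \times X'_{i_{k-1}}$ is bad for $j$ if more than $|U_j|/d_j$ choices $y \in X'_j$ satisfy $f(x_1, \ldots, x_{k-1}, y) \cap X'_{\leq j} \neq \emptyset$. By Markov, the total number of bad tuples is $\leq r_{i_1, \ldots, i_{k-1}, j} \cdot d_j / |U_j|$, and using the generalized edge-sum $k m \geq \tfrac{1}{2} \sum_j d_j |U_j|$, a careful cleaning should remove only $O_k(|U_i|)$ vertices from each $X'_i$. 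The rest of Lemma~\ref{lemma: given conditions} then transfers: when placing $u \in U_j$, the ``former'' check discards $\leq d(u) \cdot |U_j|/d_j \leq |U_j|$ images per good partner tuple, the ``later'' check discards $\leq |X'_j|/9$ images, and the injection constraint discards $< |U_j|$ images, all within the $|X'_j| \approx 4|U_j|$ budget.

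The main obstacle is the combination of the variance analysis in \ref{item: former bad pairs} and the cleaning step. The graph case already splits into four overlap patterns between two pairs-with-target-vertex; for general $k$ one has $O(k^2)$ patterns and must verify term by term that the covariance sums to the target variance $O_k(|U_{i_1}| \cdots |U_{i_{k-1}}| |U_{i_k}|^3 / m)$, with the patterns where $z$ or $z'$ sits inside the opposite tuple again controlled by the degree restriction defining $X$ (generalizing the graph proof's use of the bound $r_{i,j}^\star$ via ``at most $\ell N$ choices of $e$''). More delicate still is the cleaning step: one must convert a bound on bad $(k-1)$-tuples into a vertex-by-vertex removal in each of the $k-1$ coordinates, and naive thresholding accumulates polylogarithmic losses across the $O(T^{k-2})$ possibilities of $(i_2, \ldots, i_{k-1}, j)$. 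Carrying this out --- likely through coordinate-adapted thresholds, a random re-sampling of $X_i$ inside $X'_i$, or an iterative hypergraph-degree argument --- is where the essential technical work beyond the $k = 2$ case lies; modulo this step, the rest of the proof of Theorem~\ref{theorem: technical} carries over almost verbatim.
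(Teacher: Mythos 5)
This statement is an open conjecture in the paper, not a theorem: the authors explicitly write that their methods ``do not easily generalize'' to $k$-graphs, and the only positive result they give is \cref{prop:regular}, which handles the almost-regular case by an entirely different (Lov\'asz local lemma) argument. Your proposal is therefore not being compared against a proof, and it is not itself a proof: it is a plan that defers exactly the step where the generalization breaks down, and you acknowledge as much in your final paragraph. That acknowledged hole is a genuine gap, not a routine verification.

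To be concrete about why the deferred step is the crux: in the proof of \cref{lemma: given conditions}, the passage from $X_i'$ to $X_i$ works because ``badness'' in \cref{item: former vertices} is a property of a \emph{single} already-embedded image $x=\varphi(v)$ --- the vertex $x$ is bad for $X_j'$ if $r_j(x)\ge |U_j|/d_j$ --- so one can delete the bad vertices in advance, and Markov plus the weighted degree sum \eqref{eq: weighted summation of degrees} shows only $O(|U_i|)$ vertices are lost from each part. For $k\ge 3$, badness is a property of the $(k-1)$-tuple $(\varphi(v_1),\dots,\varphi(v_{k-1}))$ of images. The number of bad tuples guaranteed by the natural analogue of \cref{item: former bad pairs} is of order $|U_{i_1}|\cdots|U_{i_{k-1}}|\,|U_j|d_j/m$, which for $k\ge 3$ can vastly exceed $|U_{i_1}|$, so bad tuples cannot be destroyed by removing $O(|U_i|)$ vertices per part; and which tuples actually arise is decided adaptively by the algorithm, so one would need an analogue of \cref{item: later constraints} at the level of tuples, i.e.\ a union bound over all possible partial embeddings producing a given tuple --- and it is not clear this union bound closes, since the failure probability per tuple no longer beats the $2^{O(|U_{j+1}|)}$-type enumeration once the tuple involves vertices from several parts. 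Your suggested remedies (coordinate-adapted thresholds, re-sampling $X_i$ inside $X_i'$, iterative degree arguments) are named but not carried out, and none of them obviously resolves this. Until that step is supplied, the argument does not establish the conjecture; it would be more accurate to present your text as a possible attack on the open problem, and to note separately that the almost-regular case already follows from the paper's \cref{prop:regular}.
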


We can show that the conjecture holds if $G$ is almost regular, in the sense that its maximum degree $\Delta$ and average degree $d$ satisfy $\Delta = O(d)$. This follows from the following somewhat standard local lemma argument. 
\begin{theorem}[Lov\'asz local lemma, see Chapter 5 in
\cite{AlonSpencer}]\label{thm:LLL}
Let $B_1,\dots,B_m$ be events in a probability space. Suppose that there are $p,d$ such that $ep(d+1) \leq 1$, $\mathbb{P}[B_i] \leq p$ for each $i \in [m]$, and each $B_i$ is independent from all but at most $d$ other events $B_j$. 
Then there is an outcome for which none of $B_1,\dots,B_m$ happen.
\end{theorem}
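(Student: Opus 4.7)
The plan is the standard induction argument for the symmetric Lov\'asz Local Lemma. Setting $x_i := 1/(d+1)$ for each $i$, the hypothesis $ep(d+1) \leq 1$ together with the elementary inequality $(1 - 1/(d+1))^d \geq 1/e$ gives $p \leq x_i(1-x_i)^d$. Writing $N(i)$ for the set of at most $d$ indices $j \neq i$ such that $B_i$ is not mutually independent from $B_j$, we therefore have $\Pr[B_i] \leq p \leq x_i \prod_{j \in N(i)}(1-x_j)$.

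The heart of the argument is the claim that for every $i$ and every $S \subseteq \{1,\dots,m\} \setminus \{i\}$,
$$\Pr\!\left[B_i \;\Big|\; \bigcap_{j \in S} \overline{B_j}\right] \leq x_i,$$
which I would prove by induction on $|S|$, with the base case $S = \emptyset$ immediate. For the inductive step, I would partition $S = S_1 \cup S_2$ with $S_1 := S \cap N(i)$ and $S_2 := S \setminus S_1$ and write the conditional probability as
$$\frac{\Pr\!\left[B_i \cap \bigcap_{j \in S_1} \overline{B_j} \;\Big|\; \bigcap_{j \in S_2} \overline{B_j}\right]}{\Pr\!\left[\bigcap_{j \in S_1} \overline{B_j} \;\Big|\; \bigcap_{j \in S_2} \overline{B_j}\right]}.$$
The numerator is at most $\Pr[B_i \mid \bigcap_{j \in S_2} \overline{B_j}] = \Pr[B_i] \leq p$, using that $B_i$ is mutually independent from the events indexed by $S_2$. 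The denominator is bounded below by $\prod_{j \in S_1}(1-x_j)$, by unfolding it as a telescoping product and applying the inductive hypothesis to each factor (with strictly smaller conditioning sets). Combining these and using $S_1 \subseteq N(i)$ gives the required bound.

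The desired conclusion then follows from the chain-rule expansion
$$\Pr\!\left[\bigcap_{i=1}^m \overline{B_i}\right] = \prod_{i=1}^m \Pr\!\left[\overline{B_i} \;\Big|\; \bigcap_{j<i} \overline{B_j}\right] \geq \prod_{i=1}^m (1 - x_i) > 0.$$
I expect the main subtle point to be the telescoping of the denominator: one must carefully order the elements of $S_1$ so that every conditioning set appearing in the product is a \emph{proper} subset of $S$, which is precisely what allows the inductive hypothesis to apply at each step.
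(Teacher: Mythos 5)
The paper does not prove this statement; it is quoted as a known result with a pointer to Chapter 5 of Alon--Spencer, and your argument is precisely the standard proof given there: specialize the general (asymmetric) local lemma to $x_i=1/(d+1)$, prove $\Pr[B_i\mid\bigcap_{j\in S}\overline{B_j}]\le x_i$ by induction on $|S|$ via the split $S=S_1\cup S_2$, and conclude by the chain rule. The argument is correct; the only point worth making explicit is that the induction should simultaneously establish $\Pr[\bigcap_{j\in S}\overline{B_j}]>0$ so that every conditional probability (and the division in the inductive step) is well defined, and note that your worry about ordering $S_1$ is unnecessary, since for any ordering the conditioning set $S_2\cup\{j_1,\dots,j_{k-1}\}$ omits $j_k\in S$ and is therefore automatically a proper subset of $S$.
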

\begin{proposition}\label{prop:regular}
Let $G$ be a $k$-graph with $n$ vertices and maximum degree $\Delta$. Then $w(G) \leq 10k^2 n\Delta$.
\end{proposition}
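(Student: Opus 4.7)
The plan is to apply the symmetric Lovász local lemma (\Cref{thm:LLL}) to the random mapping $\varphi\colon V(G)\to[N]$ that assigns to each $v\in V(G)$ an independent uniform value $\varphi(v)\in[N]$. First I would introduce two families of bad events whose simultaneous avoidance is equivalent to the desired conclusion. For each unordered pair $\{u,v\}$ of distinct vertices of $G$, let $A_{u,v}$ be the \emph{collision} event that $\varphi(u)=\varphi(v)$, which has probability exactly $1/N$. For each pair $(e,u)$ with $e\in E(G)$ and $u\in V(G)\setminus e$, let $B_{e,u}$ be the \emph{conflict} event that $\varphi$ is injective on $e$ and $\varphi(u)\in f(\varphi(e))$; conditional on any injective assignment on $e$, the set $f(\varphi(e))$ has exactly $k$ elements and $\varphi(u)$ is uniform in $[N]$, so $\Pr[B_{e,u}]\le k/N$. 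If no bad event occurs then $\varphi$ is an injective embedding and $\varphi(V(G))\cap f(\varphi(e))=\emptyset$ for every $e\in E(G)$, which is precisely the copy $G^*$ demanded by the proposition.

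Next I would work out the dependency graph. Each bad event depends only on $\{\varphi(w):w\in W\}$ for some $W\subseteq V(G)$ of size at most $k+1$, and two bad events with disjoint supports are mutually independent, so it suffices to bound, for a single vertex $w\in V(G)$, the total number of bad events whose support contains $w$. There are at most $n-1$ collisions through $w$, at most $\Delta(n-k)$ conflicts with $w$ inside the edge (since $w$ lies in at most $\Delta$ edges, each offering at most $n-k$ external vertices), and at most $e(G)\le n\Delta/k$ conflicts in which $w$ itself is the external vertex. Summing gives $O(n\Delta)$ bad events per vertex, so every bad event is dependent on at most $(k+1)\cdot O(n\Delta)=O(kn\Delta)$ others.

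Finally, with the uniform probability bound $p:=k/N$ and the dependency bound $d=O(kn\Delta)$, the hypothesis $ep(d+1)\le 1$ of \Cref{thm:LLL} reduces to an inequality of the form $N\ge \mathrm{const}\cdot k^2 n\Delta$, and a routine arithmetic check verifies that $N\ge 10k^2 n\Delta$ suffices. \Cref{thm:LLL} then produces an outcome of $\varphi$ avoiding every collision and every conflict event, yielding the required copy $G^*$. The main obstacle I anticipate is the dependency bookkeeping, because a vertex $w$ enters conflict events in two structurally different ways (as a member of the edge, through its degree, and as the external vertex, through the edge count of $G$), and keeping the total linear in $\Delta$ rather than quadratic relies on the handshake inequality $e(G)\le n\Delta/k$; once that is set up, verifying the LLL hypothesis is a direct computation.
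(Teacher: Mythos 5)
Your proposal is correct and follows essentially the same route as the paper: a uniformly random assignment of vertices, collision events of probability $1/N$ and conflict events of probability at most $k/N$, a per-vertex dependency count of order $n\Delta$ (using $e(G)\le n\Delta/k$), and the symmetric local lemma. The only differences are cosmetic (you condition $B_{e,u}$ on injectivity over $e$ and organize the dependency bookkeeping per vertex rather than per support set), so there is nothing to add beyond actually carrying out the final constant check.
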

\begin{proof}
    Suppose that $V(G) = [n]$. Set $N = 10k^2 n\Delta$. 
    Let a function $f : E(K_N^{(k)}) \rightarrow E(K_N^{(k)})$ be given such that $f(e) \cap e = \emptyset$ for every $e \in E(K_N^{(k)})$.
    Sample vertices $x_1,\dots,x_n \in [N]$ uniformly at random. We define two types of bad events:
    \begin{itemize}
    \item For $1 \leq i < j \leq n$, let $A_{i,j}$ be the event that $x_i = x_j$.
    \item For an edge $e \in E(G)$ and $i \in [n] \setminus e$, let $B_{e,i}$ be the event that $x_i \in f(\{x_j : j \in e\})$.
\end{itemize}
We have $\mathbb{P}[A_{i,j}] = \frac{1}{N}$ and 
$\mathbb{P}[B_{e,i}] = \frac{k}{N}$. Let us now consider the dependencies. Each event $A_{i,j}$ depends only on variables $x_i,x_j$, and each event $B_{e,i}$ depends only on the variables $x_i$ and $(x_j : j \in e)$. Two events are independent if their sets of variables are disjoint. Fix any set of vertices $X \subseteq [n]$ of size $|X| \leq k+1$. The number of events $A_{i,j}$ which have a variable in $X$ is at most $|X|n \leq (k+1)n$. Let us now bound the number of events $B_{e,i}$ with a variable in $X$. In the case that $i \in X$, the number of options is at most $|X| \cdot e(G) \leq (k+1)\frac{\Delta n}{k} \leq 2\Delta n$. In the case that $e$ has a vertex in $X$, the number of options is at most $|X| \cdot \Delta \cdot n \leq (k+1)\Delta n$. Altogether, we see that each bad event ($A_{i,j}$ or $B_{e,i}$) depends on at most $(k+3)\Delta n \leq 3k\Delta n$ other bad events. By Theorem \ref{thm:LLL}, applied with $p =\frac{k}{N}$ and $d = 3k\Delta n$, there is an outcome where no bad event occurs. If this happens, $x_1,\dots,x_n$ form a copy of $G$ (with $x_i$ playing the role of $i$) such that $f(e) \cap \{x_1,\dots,x_n\} = \emptyset$ for every edge $e$ of the copy.  
\end{proof}

Caro \cite{Caro} and later Conlon, Fox and Sudakov \cite{CFS} considered a variant of the original set mapping problem (defined in Section \ref{sec:intro}) where instead of requiring that $f(X)$ is disjoint from $P$ for every $X \in \binom{P}{k}$, one only requires that $f(X)$ is not contained in $P$. Moreover, one can replace the assumption that $f(X) \cap X = \emptyset$ for every $X$ with an upper bound on $|f(X) \cap X|$. In the context of general (2-uniform) graphs, there are two natural parameters of this type, which we now define. As in \cite{CPTV_Ramsey}, for $a \in \{0,1\}$, let $F_{N,a}$ denote the set of functions $f : E(K_N) \rightarrow E(K_N)$ with $|f(e) \cap e| \leq a$ for every $e \in E(K_N)$. For a graph $G$, let $g_a(G)$ be the minimum $N$ such that for every function $f \in F_{N,a}$, there is a copy $G^*$ of $G$ in $K_N$ satisfying that $f(e) \notin E(G^*)$ for every $e \in E(G^*)$. Such a copy of $G$ is called {\em $f$-free}. Trivially, $g_1(G) \geq g_0(G) \geq |V(G)|$. The standard first moment argument gives the following lower bounds on $g_a(G)$:

\begin{proposition}\label{prop:f-free lower bounds}
    Let $G$ be a graph with $n$ vertices and $m$ edges. Then  
    $g_1(G) \geq g_0(G) \ge \Omega \big( \frac{m}{\sqrt{n \log n}} + n\big)$ and
    $g_1(G) \geq \Omega \big(\frac{1}{n\log n}\sum_{u \in V(G)}d(u)^2 + n\big) \geq \Omega\big( \frac{m^2}{n^2\log n} + n \big)$.
\end{proposition}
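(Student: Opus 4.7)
The plan is a standard first-moment argument for each of the three nontrivial bounds. For each bound, I would sample an $f \in F_{N,a}$ from a carefully chosen random distribution, compute (an upper bound on) the expected number of $f$-free embeddings $\varphi : V(G) \hookrightarrow [N]$, and show that this expectation is less than $1$ for the claimed $N$. The trivial bounds $g_0(G), g_1(G) \geq n$ are immediate, and $g_1(G) \geq g_0(G)$ follows from $F_{N,0} \subseteq F_{N,1}$.

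For $g_0(G) \geq \Omega\!\left( m/\sqrt{n \log n} \right)$, I would let $f(e)$ be independent and uniform over the edges of $K_N$ disjoint from $e$, so $f \in F_{N,0}$. For a fixed embedding $\varphi$ and an edge $e = uv \in E(G)$, the number of edges of $\varphi(E(G))$ disjoint from $\varphi(e)$ equals $m - d(u) - d(v) + 1$, so summing over $e \in E(G)$ gives $m^2 - \sum_u d(u)^2 + m$. In the regime $m \geq 2n$ (outside of which the trivial bound $n$ dominates the claim), one has $\sum_u d(u)^2 \leq m\Delta \leq m(n-1) \leq m^2/2$, so $\sum_e \Pr[f(\varphi(e)) \in \varphi(E(G))] = \Omega(m^2/N^2)$. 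Since these events are independent across $e \in E(G)$ (the edges $\varphi(e)$ are distinct), $\Pr[\varphi \text{ is } f\text{-free}] \leq \exp(-\Omega(m^2/N^2))$. Multiplying by the $\leq N^n$ embeddings and setting $N = c m / \sqrt{n \log n}$ for small enough $c > 0$ makes the expectation below $1$.

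For $g_1(G) \geq \Omega\!\big( \tfrac{1}{n \log n}\sum_u d(u)^2 \big)$, I would use a ``star-type'' distribution: for each $e = \{x, y\} \in E(K_N)$, let $f(e)$ be uniform among the $2(N-2)$ edges sharing exactly one vertex with $e$, independently across $e$. Then $f \in F_{N,1}$. For an embedding $\varphi$ and edge $e = uv \in E(G)$, the edges of $\varphi(E(G))$ that share exactly one vertex with $\varphi(e)$ are the images of edges of $G$ incident to $u$ or $v$ but distinct from $uv$, totaling $d(u)+d(v)-2$. Hence $\Pr[f(\varphi(e)) \in \varphi(E(G))] = (d(u)+d(v)-2)/(2(N-2))$. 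Using the identity $\sum_{uv \in E(G)}(d(u)+d(v)) = \sum_u d(u)^2$ and independence,
\[
\Pr[\varphi \text{ is } f\text{-free}] \leq \exp\!\left(-\frac{\sum_u d(u)^2 - 2m}{2(N-2)}\right).
\]
Setting $N$ to be a small multiple of $\sum_u d(u)^2/(n \log n)$ makes $N^n \exp(-\cdots) < 1$ (again one only needs to verify that $\sum_u d(u)^2 - 2m$ is a constant fraction of $\sum_u d(u)^2$ in the nontrivial regime, which holds since $\sum_u d(u)^2 \geq 4m^2/n \geq 4m$ when $m \geq n$). The weaker consequence $g_1(G) \geq \Omega(m^2/(n^2 \log n))$ then follows from Cauchy--Schwarz, $\sum_u d(u)^2 \geq (\sum_u d(u))^2/n = 4m^2/n$.

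There is no serious obstacle here: independence across edges of $K_N$ reduces $\Pr[\varphi \text{ is } f\text{-free}]$ to a product, and the only bookkeeping is checking that the sums $m^2 - \sum d^2$ and $\sum d^2 - 2m$ lose at most a constant factor relative to $m^2$ and $\sum d^2$ in the regime where the bound is nontrivial; both checks are short.
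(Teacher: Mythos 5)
Your proposal is correct and follows essentially the same route as the paper: the same two random distributions (uniform over disjoint edges for $g_0$, uniform over edges meeting $e$ in one vertex for $g_1$), the same product/first-moment bound, and the same regime split against the trivial bound $n$. One small slip: $\sum_u d(u)^2 \le \Delta \sum_u d(u) = 2m\Delta$, not $m\Delta$, so your threshold for the $g_0$ case should be $m \ge 4n$ rather than $2n$ -- a harmless constant adjustment since the trivial bound still dominates below that threshold.
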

\begin{proof}
    We may assume that $m \geq 3n$.
    Otherwise, both bounds clearly hold since $g_0(G),g_1(G) \geq n$. 
    To prove the bounds, we will sample certain random mappings $f \in F_{N,a}$.
    
    Consider first $g_0(G)$. 
    Sample $f$ by choosing, for each $e \in E(K_N)$, a random edge $e'$ with $|e \cap e'| = 0$, uniformly and independently, and setting $f(e) = e'$. 
    Fix any copy $G^*$ of $G$ in $K_N$.
    For each edge $e \in E(G^*)$, there are at least $m-2n$ edges in $G^*$ that are disjoint from $e$, so the probability that $f(e) \in E(G^*)$ is at least $(m-2n)/\binom{N-2}{2}=\Theta(m/N^2)$.
    By the independence of $f(e)$ for all $e \in E(G^*)$, the probability that $G^*$ is $f$-free is at most $\big(1-(m-2n)/\binom{N-2}{2}\big)^{m}=e^{-\Theta(m^2/N^2)}$.
    Then, if $N = o\big(\frac{m}{\sqrt{n \log n}}\big)$, the union bound over all copies $G^*$ of $G$ implies that the probability that there exists an $f$-free copy of $G$ is at most $N^ne^{-\Theta(m^2/N^2)} = o(1)$.
    Here, we also used $\log N = O(\log n)$.
    This means $g_0(G) = \Omega\big(\frac{m}{\sqrt{n \log n}}\big)$.
    Also, $g_0(G) \ge n$, so $g_0(G) \ge \Omega \big( \frac{m}{\sqrt{n \log n}} + n\big)$.

    For $g_1(G)$, we proceed similarly, but now we choose $f(e)$ to be a random edge $e'$ that satisfies $|e \cap e'| = 1$. 
    For each vertex $u \in V(G)$, write $d(u)$ for the degree in $G$.
    Fix a copy $G^*$ of $G$ in $K_N$.
    For any edge $e \in G^*$, say corresponding to an edge $uv$ in $G$, there are $d(u)+d(v)-2$ edges $e' \in E(G^*)\setminus \{e\}$ that are incident to $e$.
    Therefore, the probability that $G^*$ is $f$-free is at most $$
        \prod_{uv \in E(G)} \bigg(1-\frac{d(u)+d(v)-2}{2N-4}\bigg)
        \le \exp\bigg(-\!\!\!\sum_{uv \in E(G)} \frac{d(u)+d(v)-2}{2N-4}\bigg)
        \le \exp\bigg(-\frac{1}{2N-4}\sum_{u \in V(G)} d(u)^2 + \frac{2m}{2N-4}\bigg).
    $$
    Observe that by Jensen's inequality and our assumption that $m \ge 3n$, we have $$\sum_{u \in V(G)}d(u)^2 \geq n \cdot \left(\frac{2m}{n} \right)^2 = \frac{4m^2}{n} \ge 12m.$$
    So, the above probability is at most $\exp\!\big(\!-\frac{1}{4N-8}\sum_{u \in V(G)} d(u)^2\big)$.
    Then, if $N = o\big(\frac{1}{n\log n}\sum_{u\in V(G)}d(u)^2\big)$, the probability that there exists an $f$-free copy of $G$ is at most $$
        N^n \cdot \exp\bigg(-\frac{1}{4N-8}\sum_{u \in V(G)} d(u)^2\bigg)
        = N^n \cdot e^{-\omega(n\log n)} = o(1).
    $$
    This shows that $g_1(G) = \Omega\big(\frac{1}{n\log n}\sum_{u\in V(G)}d(u)^2\big)$.
    In addition, $g_1(G) \ge n$ so $$g_1(G) = \Omega\Big(\frac{1}{n\log n}\sum_{u\in V(G)}d(u)^2+n\Big).$$
    Finally, recall that as $\sum_{u \in V(G)} d(u)^2 \ge 4m^2/n$, we have $g_1(G) = \Omega\big(\frac{m^2}{n^2\log n}+n\big)$.
\end{proof}

\noindent
As for upper bounds, the following bound on $g_1(G)$ (and hence on $g_0(G)$) follow from Theorem \ref{theorem: technical}.
\begin{theorem}\label{thm:g_1}
    Let $G$ be a graph with $m$ edges and no isolated vertices. Then $g_1(G) = O(m)$.
\end{theorem}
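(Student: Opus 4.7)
The plan is to reduce the statement to Theorem \ref{theorem: technical} with $\ell=1$. The only obstruction to a direct appeal is that a function $f\in F_{N,1}$ is allowed to satisfy $|f(e)\cap e|=1$ rather than $f(e)\cap e=\emptyset$, so the hypothesis of Theorem \ref{theorem: technical} is not literally met by $f$. However, since $|f(e)|=2$, the set $f(e)\setminus e$ is always nonempty, which is exactly what will let me construct an auxiliary map to feed into the theorem.

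Concretely, set $N\ge Cm$ where $C$ is the constant from Theorem \ref{theorem: technical}, and define an auxiliary function $g:\binom{[N]}{2}\to \binom{[N]}{1}$ by letting $g(e)$ be an arbitrarily chosen element of $f(e)\setminus e$. Then $g(e)\cap e=\emptyset$ for every $e$, so $g$ satisfies the hypothesis of Theorem \ref{theorem: technical} with $\ell=1$. Applying that theorem yields an embedding $\varphi:V(G)\to [N]$ such that $g\bigl(\varphi(v)\varphi(u)\bigr)\notin \varphi(V(G))$ for every edge $vu\in E(G)$. Let $G^*$ be the resulting copy of $G$ in $K_N$.

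To finish, I would verify that $G^*$ is $f$-free. Fix any edge $e^*=\varphi(v)\varphi(u)\in E(G^*)$. By construction, $g(e^*)\in f(e^*)$, and by the property of $\varphi$ we have $g(e^*)\notin V(G^*)$. Hence $f(e^*)\not\subseteq V(G^*)$, which in particular forces $f(e^*)\notin E(G^*)$. This shows $g_1(G)\le Cm$, as desired.

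There is essentially no genuine obstacle beyond invoking Theorem \ref{theorem: technical}; the only conceptual point is that in the $g_1$ (rather than $w$) setting it suffices to exclude a \emph{single} vertex of $f(e)$ from $V(G^*)$ in order to guarantee $f(e)\notin E(G^*)$, which is precisely why taking $\ell=1$ and recording just one forbidden endpoint of $f(e)$ is enough. If one prefers to stay with $\ell=2$, one can alternatively pad $f(e)\setminus e$ with an arbitrary vertex outside $e\cup f(e)$ to obtain $g(e)\in\binom{[N]}{2}$ disjoint from $e$; the argument is identical up to a factor of $2$ in the constant.
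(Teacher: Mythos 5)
Your proposal is correct and is essentially identical to the paper's own proof: both define an auxiliary map sending each edge $e$ to a single vertex of $f(e)\setminus e$ (nonempty since $|f(e)\cap e|\le 1$), apply Theorem \ref{theorem: technical} with $\ell=1$ and $N=Cm$, and observe that excluding one endpoint of $f(e)$ from $V(G^*)$ already forces $f(e)\notin E(G^*)$. No further comment is needed.
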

\begin{proof}
    Set $N = Cm$ for a large enough $C$, and let $f : E(K_N) \rightarrow E(K_N)$ with $|f(e) \cap e| \leq 1$ for every $e \in E(K_N)$. Define a function $f' : E(K_N) \rightarrow [N]$ by setting $f'(e)$ to be a vertex in $f(e) \setminus e$ for every $e \in E(K_N)$. By Theorem \ref{theorem: technical}, there is a copy of $G^*$ with $f'(e) \cap V(G^*) = \emptyset$ for every $e \in E(G^*)$. This guarantees that $f(e) \notin E(G^*)$. Hence, $g_1(G) \leq N$.  
\end{proof}
It was shown by Conlon, Fox and Sudakov \cite[Theorem 2.2(i)]{CFS} that the bound in Theorem \ref{thm:g_1} is tight for $G = K_n$. Also, it is tight up to a logarithmic factor for $G = K_{k,n-k}$ by Proposition \ref{prop:f-free lower bounds}, because $m = k(n-k)$ and $\sum_{u \in V(G)}d(u)^2 \geq k(n-k)^2 = \Omega(n m)$ (here we assume that $k \leq \frac{n}{2}$).

It would be interesting to decide if the lower bounds in Proposition \ref{prop:f-free lower bounds} are tight, up to the logarithmic factors. In particular, we conjecture the following:
\begin{conjecture}\label{conj:g_0}
    Let $G$ be a graph with $n$ vertices, $m$ edges and no isolated vertices. Then $g_0(G) = O\big( \frac{m}{\sqrt{n}} + n \big)$.
\end{conjecture}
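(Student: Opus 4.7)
The plan is to adapt the layered embedding framework from the proof of \Cref{theorem: technical} to the $g_0$ setting, exploiting the key fact that the bad events for $g_0$ are naturally indexed by \emph{pairs} of edges rather than single edges; this pair-based structure should be the source of the $\sqrt{n}$ improvement over the $w(G)$ bound. As a first-moment sanity check: for a uniformly random $\varphi : V(G) \to [N]$, the failure event ``$f(\varphi(e)) = \varphi(e')$ for some $e, e' \in E(G)$'' concerns only vertex-disjoint pairs $(e, e')$, since $f(e) \cap e = \emptyset$ forbids any shared vertex; and each such event has probability $2/N^2$ over the four independent uniform choices. Hence the expected number of bad events is at most $2m^2/N^2$, which is $O(n)$ once $N \ge Cm/\sqrt{n}$; combined with the trivial bound $g_0(G) \ge n$, this matches the target $O(m/\sqrt{n} + n)$.

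To convert this heuristic into an actual embedding, I would mirror the architecture of \Cref{sec:proof}: order the vertices of $G$ by decreasing degree, partition into dyadic layers $U_0, \ldots, U_T$ with $|U_0| \asymp \sqrt{n}$ and $|U_j| = 2^{j-1}|U_0|$, sample disjoint candidate sets $X_0', \ldots, X_T' \subseteq [N]$ with $|X_j'| \asymp |U_j|$, and run an embedding algorithm analogous to \Cref{alg:embedding}. The essential modification is that items \ref{item: later vertices} and \ref{item: former vertices} become pair-based: item \ref{item: later vertices} forbids $\varphi(u) = x$ only when $\{x, y\} = f(\varphi(v)\varphi(w))$ for some already-committed image $y$ and some embedded edge $vw$; item \ref{item: former vertices} restricts $x$ for which $f(\varphi(v)x)$ could be completed within the committed candidate pool. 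Properties \ref{item: size}--\ref{item: former bad pairs} translate accordingly, with vertex counts replaced by pair counts; for instance, the analog of $r_{i,j}$ counts $4$-tuples $(x, y, c, d)$ with $x \in X_i'$, $y \in X_j'$, $\{c, d\} = f(\{x, y\})$, and both $c, d$ in committed candidate sets.

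The main obstacle will be the pair-based analog of property \ref{item: later constraints}. Given a partial embedding $\varphi : U_{\le j} \to X_{\le j}$, one needs to bound the number of pair-images $f(\varphi(v)\varphi(w))$ (ranging over edges $vw \in E(G)$ with $v, w \in U_{\le j}$) lying entirely inside $X_{j+1}'$. There are at most $m$ such pair-images, and each lies inside $X_{j+1}'$ with probability roughly $(|U_{j+1}|/N)^2$, quadratically smaller than the $|U_{j+1}|/N$ factor in the original vertex-based argument; this quadratic factor is precisely the source of the $\sqrt{n}$ saving. However, the union bound over the $2^{O(|U_{j+1}|)}$ algorithmically-reachable partial embeddings must still be beaten by a Chernoff-type tail bound, and this requires careful handling of the mild correlations between the ``pair in $X_{j+1}'$'' events arising from shared coordinates. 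The covariance computation for the analog of $r_{i,j}^\star$ in \Cref{lemma: exist good sets} will then need a more elaborate case split by the number of shared vertices among two $4$-tuples. Finally, in the sparse regime $m \le n^{3/2}$ where the additive $+n$ term dominates, a direct Lov\'asz local lemma argument applied to the pair events $B_{e, e'}$ (with random embedding into a space of size $\Theta(n)$) should complete the picture, provided the layered structure is again used to tame per-vertex dependencies for graphs of very uneven degree.
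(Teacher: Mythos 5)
This statement is labelled as a \emph{conjecture} in the paper: the authors do not prove it, and it is posed as an open problem. Their only progress toward it is the remark that an asymmetric local lemma argument (in the spirit of \Cref{prop:regular}) gives $g_0(G) = O(\sqrt{\Delta m} + n)$, which matches the conjectured bound only when $\Delta = O(m/n)$. So there is no proof in the paper to compare yours against, and your submission should be judged on its own terms --- and on those terms it is a research plan, not a proof. You repeatedly flag the decisive steps as unresolved (``the main obstacle will be,'' ``must still be beaten,'' ``will need a more elaborate case split,'' ``should complete the picture''), and none of them is carried out.

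Two of the gaps are not merely unfinished bookkeeping. First, your ``first-moment sanity check'' certifies nothing: an expected $O(n)$ bad events is compatible with every embedding being bad, and the entire difficulty of the conjecture is converting that global count into an actual good copy. Second, and more seriously, the quadratic saving you rely on is hard to realize inside the sequential framework of \Cref{alg:embedding}. In that algorithm each bad pair-event $f(\varphi(v)\varphi(w)) = \varphi(a)\varphi(b)$ is resolved only when its \emph{last} vertex is embedded, at which point it is a single-vertex constraint with conditional probability $\Theta(1/N)$, not $\Theta(1/N^2)$; the factor $1/N^2$ is visible only in aggregate. Concretely, in your analog of property \ref{item: later constraints} the union bound over the $2^{\Theta(|U_{j+1}|)}$ reachable inputs must be beaten by the upper tail of the number of pairs of $L$ landing in $X_{j+1}'$; that count has mean of order $m(|U_{j+1}|/N)^2$, and beating the union bound at threshold $\Theta(|U_{j+1}|)$ forces roughly $N = \Omega(\sqrt{m|U_{j+1}|})$, hence $N = \Omega(\sqrt{mn})$ for the top layer. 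For $n \ll m \ll n^2$ (e.g.\ $m = n^{3/2}$, where the target is $O(n)$ but $\sqrt{mn} = n^{5/4}$) this exceeds $m/\sqrt{n} + n$, so even an optimistic execution of your plan appears to fall short exactly in the irregular intermediate regime that makes the conjecture hard. Your closing suggestion to fall back on the local lemma in the sparse regime runs into the same issue: the local lemma's cost is governed by $\Delta$, not by $m/n$, and taming the dependencies for very irregular graphs is precisely the open problem.
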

\noindent
The bound in Conjecture \ref{conj:g_0} is tight for $G = K_n$; see \cite[Theorem 2.2(ii)]{CFS}. Also, using the (asymmetric) local lemma similarly to the proof of Proposition \ref{prop:regular}, one can show that $g_0(G) = O(\sqrt{\Delta m} + n)$, which implies Conjecture \ref{conj:g_0} if $\Delta = O(m/n)$, i.e. if the maximum degree is at most a constant times the average degree.

\bibliographystyle{abbrv}
\bibliography{library}

@article{CFS,
  title={Short proofs of some extremal results {I}{I}},
  author={Conlon, David and Fox, Jacob and Sudakov, Benny},
  journal={Journal of Combinatorial Theory, Series B},
  volume={121},
  pages={173--196},
  year={2016},
  publisher={Elsevier}
}

@article{CPTV_Turan,
  title={Edge mappings of graphs: {T}ur{\'a}n type parameters},
  author={Caro, Yair and Patk{\'o}s, Bal{\'a}zs and Tuza, Zsolt and Vizer, M{\'a}t{\'e}},
  journal={European Journal of Combinatorics},
  volume={127},
  pages={104140},
  year={2025},
  publisher={Elsevier}
}

@article{CPTV_Ramsey,
  title={Edge mappings of graphs: {R}amsey type parameters},
  author={Caro, Yair and Patk{\'o}s, Bal{\'a}zs and Tuza, Zsolt and Vizer, M{\'a}t{\'e}},
  journal={arXiv preprint arXiv:2402.01004},
  year={2024}
}

@article{AlonCaro,
  title={Extremal problems concerning transformations of the set of edges of the complete graph},
  author={Alon, Noga and Caro, Yair},
  journal={European Journal of Combinatorics},
  volume={7},
  number={2},
  pages={93--104},
  year={1986},
  publisher={Elsevier}
}

@article{Caro,
  title={Extremal problems concerning transformations of the edges of the complete hypergraphs},
  author={Caro, Yair},
  journal={Journal of graph theory},
  volume={11},
  number={1},
  pages={25--37},
  year={1987},
  publisher={Wiley Online Library}
}

@article{ACT,
  title={Sub-{R}amsey numbers for arithmetic progressions},
  author={Alon, Noga and Caro, Yair and Tuza, Zsolt},
  journal={Graphs and Combinatorics},
  volume={5},
  number={1},
  pages={307--314},
  year={1989},
  publisher={Springer}


}

@article{AM,
  title={Sub-{R}amsey numbers for arithmetic progressions},
  author={Axenovich, Maria and Martin, Ryan},
  journal={Graphs and Combinatorics},
  volume={22},
  pages={297--309},
  year={2006},
  publisher={Springer}
}

@article{Caro_Schoenheim,
  title={On edge-mappings with fixed edges avoiding free triangles},
  author={Caro, Y and Sch{\"o}nheim, J},
  journal={Ars Combinatoria},
  pages={159--164}
}

@article{EH,
  title={On the structure of set-mappings},
  author={Erd{\H{o}}s, P and Hajnal, A},
  journal={Acta Math. Acad. Sci. Hung},
  volume={9},
  pages={111--131},
  year={1958}
}

@article{Spencer,
author = {Spencer, Joel},
title = {Tur{\'a}n’s theorem for $k$-graphs}, 
journal = {Discrete Math.}, 
volume = {2},
year = {1972},
pages = {183-186}
}

@article{Furedi,
  title={Maximal independent subsets in {S}teiner systems and in planar sets},
  author={F{\"u}red, Zolt{\'a}n},
  journal={SIAM Journal on Discrete Mathematics},
  volume={4},
  number={2},
  pages={196--199},
  year={1991},
  publisher={SIAM}
}

@book{AlonSpencer,
  title={The probabilistic method},
  author={Alon, Noga and Spencer, Joel H},
  year={2016},
  publisher={John Wiley \& Sons}
}

\end{document}